\newtheorem{theorem}{Theorem}
\newtheorem{lemma}[theorem]{Lemma}
\newtheorem{proposition}[theorem]{Proposition}
\newtheorem{conj}[theorem]{Conjecture}
\newtheorem{corollary}[theorem]{Corollary}
\newtheorem*{nota}{Notation}
\newtheorem*{claim}{Claim}
\newtheorem*{example}{Example}
\newtheorem{rmk}[theorem]{Remark}
\newtheorem{definition}[theorem]{Definition}
\newcommand{\Z}{ \mathbb{Z}}
\newcommand{\F}{ \mathbb{F}}
\newcommand{\lcm}{ {\rm lcm}}
\newcommand{\gcmd}{{\rm gcmd}}
\newcommand{\jac}[2]{ \left( #1 \mid #2 \right)}
\newcommand{\jacs}[2]{ \left( #1 \mid #2 \right)}
\newcommand{\floor}[1]{\left\lfloor #1 \right\rfloor}
\providecommand{\abs}[1]{\left\vert #1 \right\vert}
\title{Average liar count for degree-$2$ Frobenius pseudoprimes}
\author[A. Fiori]{Andrew Fiori}
\thanks{A.F. gratefully acknowledges support from the Pacific Institute for Mathematical Sciences (PIMS)}
\address{%
Mathematics and Computer Science,
C526 University Hall, 4401 University Drive,
University of Lethbridge,
Lethbridge, Alberta, T1K 3M4}
\email{andrew.fiori@uleth.ca}
\author[A. Shallue]{Andrew Shallue}
\thanks{A.S. supported by an Artistic and Scholarly Development grant from Illinois Wesleyan University}
\address{Illinois Wesleyan University, 1312 Park St, Bloomington, IL, 61701 USA}
\email{ashallue@iwu.edu}
\begin{document}

\begin{abstract}
In this paper we obtain lower and upper bounds on the average number of liars for the Quadratic Frobenius Pseudoprime 
Test of Grantham \cite{Grantham01}, generalizing arguments of Erd\H{o}s and Pomerance \cite{ErdosPomerance01} and 
Monier \cite{Monier01}.  These bounds are provided for both Jacobi symbol $\pm 1$ cases, providing evidence for 
the existence of several challenge pseudoprimes.
\end{abstract}

\keywords{Primality Testing, Pseudoprime, Frobenius Pseudoprime, Lucas Pseudoprime}

\subjclass[2000]{Primary 11Y11; Secondary 11A41}

\maketitle

\section{Introduction}

A pseudoprime is a composite number that satisfies some necessary condition for primality.
Since primes are necessary building blocks for so many algorithms, and since the most common way 
to find primes in practice is to apply primality testing algorithms based on such necessary conditions, 
it is important to gather what information we can about pseudoprimes.  In addition to the practical benefits, 
pseudoprimes have remarkable divisibility properties that make them fascinating objects of study.

The most common necessary condition used in practice is that the number has no small 
divisors.  Another common necessary condition follows from a theorem of Fermat, 
that if $n$ is prime and $\gcd(a,n)=1$ then $a^{n-1} = 1 \pmod{n}$.
If $\gcd(a,n) = 1$ and $a^{n-1} = 1 \pmod{n}$ for composite $n$ we call $a$ a Fermat liar, 
and denote by $F(n)$ the set of Fermat liars with respect to $n$, or more precisely the set of 
their residue classes modulo $n$.

For the purposes of generalization, it is useful to translate the Fermat condition to polynomial rings.
Let $n$ be prime, let $R = \Z/n\Z$, assume $a \in R^{\times}$, and construct the polynomial ring 
$R[x]/\langle x-a \rangle$.  Then a little work shows that $x^n = x$ in $R[x]/\langle x-a \rangle$
\cite[Proof of Theorem 4.1]{Grantham01}.  After all, as $x = a$ in $R[x]/\langle x-a \rangle$, 
we have $R[x]/\langle x-a \rangle \cong R$ as fields, and $a^n = a$ in $R$.  The advantage of 
this view is that $x-a$ may be replaced by an arbitrary polynomial.

In the following definition $\gcmd$ stands for ``greatest common monic divisor," and implicitly depends 
on a modulus $n$.
Following \cite{Grantham01}, for monic polynomials $g_1(x), g_2(x), f(x) \in (\Z/n\Z)[x]$ 
we say that $\gcmd(g_1(x), g_2(x)) = f(x)$ if
 the ideal generated by $g_1(x), g_2(x)$ is principal and equals 
the ideal generated by $f(x)$ in $(\Z/n\Z)[x]$.  

\begin{definition}[\cite{Grantham01}, Section 3]
Let $f(x) \in \Z[x]$ be a monic polynomial of degree $d$ and discriminant $\Delta$.  
Then odd composite $n$ is a Frobenius pseudoprime with respect to $f(x)$ if 
the following conditions all hold.
\begin{enumerate}
\item (Integer Divisibility) We have $\gcd(n, f(0)\Delta) = 1$.

\item (Factorization) Let $f_0(x) = f(x) \pmod{n}$.   Define 
$F_i(x) = \gcmd(x^{n^i}-x, f_{i-1}(x))$ and $f_i(x) = f_{i-1}(x)/F_i(x)$ for $1 \leq i \leq d$.  All of the $\gcmd$s
exist and $f_d(x) = 1$.

\item (Frobenius) For $2 \leq i \leq d$, $F_i(x) \mid F_i(x^n)$.

\item (Jacobi) Let $S = \sum_{2 \mid i} \deg(F_i(x))/i$.  Have $(-1)^S = \jacs{\Delta}{n}$, 
where $ \jacs{\Delta}{n}$ is the Jacobi symbol.
\end{enumerate}

\end{definition}

If $n$ is prime, then $(\Z/n\Z)$ is a field, making $(\Z/n\Z)[x]$ a principal ideal domain, from which 
it follows that $\gcmd(g_1(x), g_2(x))$ must exist.   As an example of non-existence note that 
the ideal $\langle x+2, x \rangle \subseteq (\Z/6\Z)[x]$ is non-principal and thus $\gcmd(x+2, x)$ 
does not exist modulo $6$.
Grantham shows that if $\gcmd(g_1(x), g_2(x))$ exists in $(\Z/n\Z)[x]$ then for each prime $p \mid n$, 
 the usual $\gcd(g_1(x), g_2(x))$, when taken over $\Z/p\Z$, will have the same degree \cite[Corollary 3.3]{Grantham01}.  
 Furthermore, the Euclidean algorithm when applied to $g_1(x), g_2(x)$ will either correctly compute their $\gcmd$, 
 or find a proper factor of $n$ when the leading coefficient of a 
 remainder fails to be a unit \cite[Proposition 3.5]{Grantham01}. 
 Returning to the earlier example, 
 if we apply the Euclidean algorithm to attempt to compute $\gcd(x, x+2)$ modulo $6$, 
 the first recursive step yields $\gcd(x+2, 2)$ and then in performing the resulting division 
 the attempt to invert $2$ fails and yields a factor of $6$.
 
\begin{example}
Suppose $d = 1$ and $n$ is a Frobenius pseudoprime with respect to $f(x) = x-a$.
Then $\gcd(a,n) = 1$ and $\gcmd(x^n-x, x-a) = x-a$, which implies $a$ is a unit and  $a^n = a \pmod{n}$. 
Hence $a$ is a Fermat liar with respect to 
$n$.  Conversely, if $a$ is a Fermat liar then $\gcd(a,n)=1$ and $\gcmd(x^n-x, x-a) = x-a$, from which we conclude that $n$ is a Frobenius 
pseudoprime with respect to $x-a$.
\end{example}

A Frobenius liar is then a polynomial, and to count the liars with respect to $n$ we restrict our polynomials 
to members of $(\Z/n\Z)[x]$.
We denote by $L_d(n)$ the set of Frobenius liars of degree $d$ with respect to $n$, and note by the example above
that $L_1(n) = F(n)$.  We will further divide the set $L_2(n)$ into $L^+_2(n)$ and $L^-_2(n)$.
A degree $2$ polynomial $f(x)$ with discriminant $\Delta$ will be in $L^+_2(n)$ (respectively $L^-_2(n)$)
if $ \jacs{\Delta}{n} = 1$ (respectively $-1$).  Notice that if $ \jacs{\Delta}{n} = 0$, $f(x)$ is not a liar since it fails 
the Integer Divisibility step.  
Let ${\rm Frob}_2(y, f(x))$ be the set of degree-$2$ Frobenius pseudoprimes with respect to $f(x)$, up to bound $y$, 
and similarly divide them into $+$ and $-$ sets according to the Jacobi symbol.  Further, let ${\rm Frob}_2(f(x))$
be the (possibly infinite) set of all such pseudoprimes.
As an abuse of notation, the same symbols will be used for the size of each set.

The main goal of this work is to generalize \cite[Theorem 2.1]{ErdosPomerance01}, which bounds the average 
number of Fermat liars.  We prove the following two theorems.

\begin{theorem}\label{thm:th1}
For all $\alpha$ satisfying Proposition \ref{prop:lb1}, in particular $\alpha \leq \frac{10}{3}$, we have that as $y\rightarrow \infty$
$$
y^{3-\alpha^{-1}-o(1)} \leq \sum_{n \leq y} L_2^+(n) \leq y^3 \cdot \mathcal{L}(y)^{-1 + o(1)}
$$
where the sum is restricted to odd composite $n$.
Moreover, the same bounds hold if we replace $L_2^+(n)$ by $L_2(n)$.
Here $\mathcal{L}(y) = {\rm exp}( (\log{y})(\log\log\log{y})/\log\log{y})$, with $\log$ denoting the natural logarithm.
\end{theorem}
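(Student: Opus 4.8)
The overall strategy is to reduce $L_2^+(n)$ to a multiplicative quantity built from $\gcd(n-1,p-1)$ and $\gcd(n-1,p^2-1)$, and then to apply, for the upper bound, the level-set machinery of Erd\H{o}s--Pomerance \cite{ErdosPomerance01}, and for the lower bound, the construction packaged in Proposition~\ref{prop:lb1}.

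\emph{Structure of $L_2^+(n)$.} The first step is to make the Frobenius conditions for $d=2$ completely explicit in the Jacobi $+1$ case. For $d=2$ the only even index occurring in condition (4) is $i=2$, so $S=\deg(F_2)/2$; since $\deg(F_2)\le\deg(f_0)=2$ and $\deg(F_2)$ is even (as the test requires), the identity $(-1)^{S}=\jacs{\Delta}{n}=1$ forces $\deg(F_2)=0$, hence $F_2=1$, $f_1=f_2=1$, and $F_1=f_0$. Thus $f\in L_2^+(n)$ precisely when $f_0\mid x^{n}-x$ in $(\Z/n\Z)[x]$, $\gcd(n,f_0(0))=1$, and $\jacs{\mathrm{disc}(f_0)}{n}=1$; and since $x^{n-1}-1$ has unit constant term, this is equivalent to: $f_0$ is a monic quadratic divisor of $x^{n-1}-1$ in $(\Z/n\Z)[x]$ with $\jacs{\mathrm{disc}(f_0)}{n}=1$ (the coprimality being automatic). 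Now restrict to squarefree $n$ (the non-squarefree $n$ contribute negligibly, as in \cite{ErdosPomerance01}) and use that $x^{n-1}-1$ is separable modulo every $p\mid n$ (because $p\mid n$, its derivative $-x^{n-2}$ is coprime to it). The Chinese Remainder Theorem then factors the count over the primes $p\mid n$: over $\F_p$ a monic quadratic divisor of $x^{n-1}-1$ is either a product of two distinct linear factors — there are $a_p:=\binom{g_p}{2}$ of these, $g_p:=\gcd(n-1,p-1)$, each with square discriminant — or irreducible — there are $b_p:=(h_p-g_p)/2$ of these, $h_p:=\gcd(n-1,p^2-1)$, each with non-square discriminant. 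Since $\jacs{\mathrm{disc}(f_0)}{n}=\prod_{p\mid n}\jacs{\mathrm{disc}(f_0)}{p}$, the constraint $\jacs{\mathrm{disc}(f_0)}{n}=1$ says exactly that an even number of the local factors are irreducible, so
$$
L_2^+(n)=\sum_{\substack{T\subseteq\{p\mid n\}\\ |T|\ \mathrm{even}}}\ \prod_{p\mid n,\,p\notin T}a_p\ \prod_{p\in T}b_p .
$$
All terms of this sum are nonnegative (as $g_p\le h_p$ gives $b_p\ge0$), so keeping only $T=\emptyset$ gives $L_2^+(n)\ge\prod_{p\mid n}\binom{\gcd(n-1,p-1)}{2}$; on the other hand $L_2^+(n)\le\prod_{p\mid n}(a_p+b_p)\le|F(n)|\,H(n)$, where $H(n):=\prod_{p\mid n}\gcd(n-1,p^2-1)$ and I used $a_p+b_p\le g_ph_p$.

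\emph{Upper bound.} Combining $L_2^+(n)\le|F(n)|H(n)$ with the trivial $L_2^+(n)\le n^2$, I would run the Erd\H{o}s--Pomerance level-set argument: a dyadic decomposition reduces the sum to controlling, for each threshold $M\le y^2$, the number of composite $n\le y$ with $|F(n)|H(n)\ge M$, and then summing $M$ times that count. The key point is that $H(n)$ is governed by the same estimates as $|F(n)|$, since $p^2-1=(p-1)(p+1)$ only introduces the extra factor $p+1$; thus the Erd\H{o}s--Pomerance bound $\sum_{n\le y,\ \mathrm{composite}}|F(n)|\le y^2\mathcal{L}(y)^{-1+o(1)}$ upgrades to the level-set estimate needed to conclude $\sum_{n\le y,\ \mathrm{composite}}L_2^+(n)\le y^3\mathcal{L}(y)^{-1+o(1)}$. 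For the statement with $L_2(n)$ one must also bound $\sum L_2^-(n)$: the same unwinding shows that in the Jacobi $-1$ case $F_1=1$ and $F_2=f_0$, so $f_0$ is a quadratic divisor of $x^{n^2-1}-1$ subject to the Frobenius condition $f_0\mid f_0(x^{n})$, whence $L_2^-(n)$ is bounded by $\prod_{p\mid n}\gcd(n^2-1,p^2-1)$-type quantities; since $n^2-1=(n-1)(n+1)$ and $p^2-1=(p-1)(p+1)$, the same dyadic argument — this is the Lucas-pseudoprime bookkeeping of \cite{Monier01} fed into \cite{ErdosPomerance01} — gives $\sum_{n\le y,\ \mathrm{composite}}L_2^-(n)\le y^3\mathcal{L}(y)^{-1+o(1)}$, hence the bound for $L_2(n)$.

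\emph{Lower bound.} Let $\mathcal{S}$ be the set of composite squarefree $n\le y$ with $\gcd(n-1,p-1)\ge2$ for every $p\mid n$ and with $\omega(n)$ at most a slowly growing function of $y$. For $n\in\mathcal{S}$, the lower inequality above gives $L_2^+(n)\ge\prod_{p\mid n}\binom{\gcd(n-1,p-1)}{2}\ge|F(n)|^2/4^{\omega(n)}$, and $4^{\omega(n)}=y^{o(1)}$ on $\mathcal{S}$, so $\sum_{n\le y,\ \mathrm{composite}}L_2^+(n)\ge y^{-o(1)}\sum_{n\in\mathcal{S}}|F(n)|^2$. What remains is to exhibit a subfamily with $|F(n)|$ and cardinality both large; this is the content of Proposition~\ref{prop:lb1}, which generalizes the Erd\H{o}s--Pomerance construction of integers with abnormally many Fermat liars (primes $p$ whose $p-1$ divides a highly composite smooth modulus $L$, multiplied together so that $L\mid n-1$, forcing $\gcd(n-1,p-1)=p-1$ for each $p\mid n$) and shows that, for every admissible $\alpha$ — in particular $\alpha\le\frac{23}{8}$ — there is such a subfamily of $\mathcal{S}$ with $\sum|F(n)|^2\ge y^{3-\alpha^{-1}-o(1)}$. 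This gives the lower bound for $L_2^+$, and the lower bound for $L_2(n)$ follows at once from $L_2(n)\ge L_2^+(n)$.

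\emph{Main obstacle.} I expect the real work to be the uniform level-set estimate of the second step: showing that the set of $n\le y$ for which $L_2(n)$ is large — equivalently $|F(n)|H(n)$ and its Lucas-side analogue $\prod_{p\mid n}\gcd(n^2-1,p^2-1)$ — is no denser than what Erd\H{o}s and Pomerance establish for $|F(n)|$ alone. Every counting step of \cite{ErdosPomerance01} (sums of $\gcd(n-1,p-1)$ over $n\le y$, counts of primes $p\le z$ with a prescribed divisor of $p-1$, and their interplay with $p\mid n$, hence $p\nmid n-1$) must be redone with $p^2-1$ and $n^2-1$ in place of $p-1$ and $n-1$, checking that the extra factors $p+1$ and $n+1$ never improve the exponents by enough to defeat the $\mathcal{L}(y)^{-1+o(1)}$ savings; on the lower-bound side the analogous delicate point is exactly what is packaged into Proposition~\ref{prop:lb1}. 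By contrast, the structural reduction of the first step — including the non-squarefree case and the precise matching of the Jacobi condition with the parity of the number of irreducible local factors — is routine.
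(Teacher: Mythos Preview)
Your structural analysis of $L_2^+(n)$ is correct and matches the paper's Corollary~\ref{plusformula} exactly, and the overall plan (Monier-type formula, Erd\H{o}s--Pomerance machinery above, smooth-shifted-prime construction below) is the paper's approach. Two points, however, are not as you describe them.

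\emph{Lower bound.} You misread Proposition~\ref{prop:lb1}: it asserts only that $\abs{P_\alpha^{(+)}(x)}>(\log x)^{\alpha-o(1)}$, i.e.\ that there are many primes $p<(\log x)^{\alpha}$ with $p-1\mid M(x)$ (Balog's smoothness result). It says nothing about composites with many liars, and in particular it does not produce a family with $\sum|F(n)|^2\ge y^{3-\alpha^{-1}-o(1)}$. The construction in the paper is: form products $s$ of $k_\alpha^{(+)}(x)\approx\log x/(\alpha\log\log x)$ such primes (Proposition~\ref{prop:Splussize} counts these), and then use Linnik's theorem to find a prime $q<M^L$ with $sq\equiv 1\pmod{M}$ (Lemma~\ref{lem:consQ1}). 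This last step is the missing idea in your sketch: merely ``multiplying together'' primes with $p-1\mid M$ does \emph{not} force $M\mid n-1$ (e.g.\ $p_1=3$, $p_2=5$, $M=4$, $n=15$), so the Linnik adjustment is essential to get $\gcd(n-1,p-1)=p-1$ for each $p\mid s$. Once $n=sq$ is built, the paper bounds $L_2^+(n)$ directly from the $L_2^{++}$ term, exactly as you do.

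\emph{Upper bound.} Your inequality $L_2^+(n)\le |F(n)|\,H(n)$ is correct, but it does not by itself feed into a dyadic argument: you still need a single parameter controlling both the size of $L_2^+(n)$ and the sparsity of the level set. The paper's device is to set $k=\lambda(n)/\gcd(\lambda(n),n-1)$ and prove directly (Lemma~\ref{lemma:plus}, via a prime-by-prime valuation argument) that $L_2^+(n)\le\frac{1}{k}\prod_{p\mid n}d_n(p)\le n^2/k$; the sum then splits over $k$, and for $k\le\mathcal{L}(x)$ one bounds $|C_k(x)|$ by the three-case analysis you allude to, whose engine is the Pomerance-type count $\#\{m\le x:\lambda(m)=N\}\le x\,\mathcal{L}(x)^{-1+o(1)}$ (and its $\lambda_2$ analogue, Lemma~\ref{lemma:lambda2count}, for the minus case). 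Your ``main obstacle'' paragraph correctly identifies where the work lies, but the organizing parameter $k$ and the inequality $L_2^+(n)\le n^2/k$ are what make the level-set argument go through, not the product bound $|F(n)|\,H(n)$.
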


\begin{theorem}\label{thm:th2}
For all $\alpha$ satisfying Proposition \ref{prop:lb2}, in particular $\alpha \leq \frac{4}{3}$, we have that as $y\rightarrow \infty$
$$
y^{3-\alpha^{-1}-o(1)} \leq \sum_{n \leq y} L_2^-(n) \leq y^3 \cdot \mathcal{L}(y)^{-1 + o(1)}
$$
where the sum is restricted to odd composite $n$. 
\end{theorem}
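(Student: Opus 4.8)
The upper bound needs no new argument: a degree-$2$ polynomial has $\jacs{\Delta}{n}\in\{1,0,-1\}$, the case $\jacs{\Delta}{n}=0$ fails Integer Divisibility, so $L_2^-(n)\le L_2(n)$ and hence $\sum_{n\le y}L_2^-(n)\le\sum_{n\le y}L_2(n)<y^3\,\mathcal{L}(y)^{-1+o(1)}$ by Theorem~\ref{thm:th1}. For a self-contained proof one runs the Erd\H{o}s--Pomerance method on $L_2^-$ directly. When $\jacs{\Delta}{n}=-1$ a nontrivial $F_1(x)$ would force $f$ to split modulo every prime divisor of $n$ and hence give $\jacs{\Delta}{n}=1$; so every liar is a quadratic $f=x^2-ax+b$ with $\gcd(n,f(0)\Delta)=1$, $\jacs{\Delta}{n}=-1$, and $f(x)$ dividing both $x^{n^2}-x$ and $f(x^n)$ in $(\Z/n\Z)[x]$. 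Reducing modulo each $p\mid n$ and counting elements of prescribed multiplicative order in $\F_p^\times$ and $\F_{p^2}^\times$ gives $L_2^-(n)\le\prod_{p\mid n}g_p(n)$ with each $g_p(n)$ a product of $\gcd$'s among $n\pm1$, $n-p$ and $p\pm1$, all dividing $p^2-1$; non-squarefree $n$ are dealt with by discarding those with a square factor above $\mathcal{L}(y)$. One then invokes the Erd\H{o}s--Pomerance estimate that the set of $n\le y$ with $\prod_{p\mid n}g_p(n)>n^2\,\mathcal{L}(y)^{-1+\varepsilon}$ — the analogue of the bound on Carmichael numbers — has size $y\,\mathcal{L}(y)^{-1+o(1)}$, and combines this with the trivial $L_2^-(n)\le n^2$ on the exceptional $n$.

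The substance of the theorem is the lower bound, which is Proposition~\ref{prop:lb2}; I would prove it by an Erd\H{o}s-type construction tailored to the $-1$ test. Fix an even, highly composite modulus $L$ and consider primes $p\equiv-1\pmod L$, so that $L\mid p+1\mid p^2-1$ and $\gcd(L,p-1)=2$. For any squarefree $n=p_1\cdots p_k$ built from such primes with $k$ odd one has $n\equiv-1\pmod L$, hence $L\mid n-p_i$ for every $i$. In each $\F_{p_i^2}^\times$ the subgroup of order $L$ consists of exactly $L$ elements, only $\pm1$ of which lie in $\F_{p_i}$; each of the other $L-2$ elements $\theta$ satisfies $\theta\notin\F_{p_i}$ and $\theta^{n-p_i}=1$, hence $\theta^{n+1}=\theta^{p_i+1}$, and they pair off under $\theta\mapsto\theta^{p_i}$ into $(L-2)/2$ irreducible quadratics $f=(x-\theta)(x-\theta^{p_i})$, automatically with $\jacs{\Delta_f}{p_i}=-1$. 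By the CRT any choice of one such $f$ modulo each $p_i$ produces a quadratic modulo $n$ certifying $n$ as a degree-$2$ Frobenius pseudoprime: Integer Divisibility holds since $\theta_i\ne0$ and $p_i\nmid\Delta$; $F_1(x)=1$ since $\theta_i^{n-1}=\theta_i^{p_i-1}\ne1$; $F_2(x)=f(x)$ since $\ord(\theta_i)\mid L\mid n^2-1$; the Frobenius condition since $\theta_i^n=\theta_i^{p_i}$ is the conjugate root of $f$; and the Jacobi condition since $S=\deg F_2(x)/2=1$ while $\jacs{\Delta}{n}=(-1)^k=-1$. Thus $L_2^-(n)\ge\bigl((L-2)/2\bigr)^{\omega(n)}$ for every such $n$, and more generally, whenever $\omega(n)$ is odd, $L_2^-(n)\ge\prod_{p\mid n}\tfrac12\bigl(\gcd(n-p,p^2-1)-\gcd(n-p,p-1)\bigr)$, the right-hand side counting, locally at each $p$, the inert liar quadratics $(x-\theta)(x-\theta^{p})$ with $\theta^{n}=\theta^{p}$.

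It remains to choose $L$ and sum. Expanding each $\gcd(n-p,p^2-1)$ over its divisors and taking $L$ with many divisors — so that $\gcd(n-p,p^2-1)$ is a large power of $p$ for many configurations of $p$, exactly as in constructions of Carmichael numbers — one bounds $\sum_{n\le y}L_2^-(n)$ below by a sum over squarefree $n\le y$ whose (odd number of) prime factors all lie in the progression $-1\bmod L$, weighted by $\asymp L^{\omega(n)}$, and estimates the inner count by a Bombieri--Vinogradov-type bound for primes $p\equiv-1\pmod L$ below a small power of $y$; a careful optimization over $L$ and $\omega(n)$ then yields the exponent $3-\alpha^{-1}-o(1)$. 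This analytic input is the main obstacle. In the $+1$ case of Theorem~\ref{thm:th1} the relevant modulus is coupled to $p-1$, for which strong results on primes in progressions and on divisors of shifted primes permit $\alpha$ as large as $\tfrac{23}{8}$; in the $-1$ case it is coupled to $p+1$, where the available estimates are weaker, limiting $\alpha$ to $\tfrac43$ here. Everything else — keeping $\omega(n)$ odd, discarding non-squarefree $n$ and $n$ with a dominant prime factor, and checking that the $\gcmd$s in the Factorization step exist for the constructed pairs $(n,f)$ — is routine.
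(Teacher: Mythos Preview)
Your upper bound is fine: the reduction $L_2^-(n)\le L_2(n)$ together with Theorem~\ref{thm:th1} already gives it, and your self-contained sketch is essentially what the paper does in Section~\ref{sec:upper} via Lemma~\ref{lemma:liarcountupperbounds}, Lemma~\ref{lemma:minus}, and Lemma~\ref{lemma:lambda2count}.

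The lower bound has a real gap. Your construction takes primes $p\equiv -1\pmod L$ for a single modulus $L$ and produces $L_2^-(n)\ge\bigl((L-2)/2\bigr)^{\omega(n)}$. But $L\mid p+1$ forces $L\le p+1$ for every prime factor $p$ of $n$, so $L^{\omega(n)}\le\prod_{p\mid n}(p+1)\ll n$. Thus each term in your sum is $O(y)$, and summing over at most $y$ composites gives $\sum_{n\le y}L_2^-(n)\ll y^{2+o(1)}$, strictly below $y^{3-\alpha^{-1}}$ for every $\alpha>1$. No optimisation over $L$ and $\omega(n)$ can repair this: the constraint $L\mid p+1$ hard-caps the local liar count at $p$ by roughly $p$, whereas the target exponent requires it to be roughly $p^2$. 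Your reference to a ``Bombieri--Vinogradov-type bound for primes $p\equiv -1\pmod L$'' is correspondingly the wrong analytic input; what is actually needed is smoothness of $p^2-1$.

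The paper's route is genuinely different. One takes primes $p<(\log x)^\alpha$ with $p^2-1\mid M(x)$, where $M(x)$ is the lcm of integers up to $\log x/\log\log x$; Proposition~\ref{prop:lb2} (via \cite{DMT01}) furnishes $(\log x)^{\alpha-o(1)}$ of them for $\alpha\le 4/3$. The key new step for the $-1$ case is Proposition~\ref{prop:lb2X}: a pigeonhole over factorisations $M=M_1M_2$ with $\gcd(M_1,M_2)=2$ locates a single partition for which still $(\log x)^{\alpha-o(1)}$ of those primes satisfy $p-1\mid M_1$ and $p+1\mid M_2$ simultaneously. One then lets $s$ be a product of roughly $\log x/(\alpha\log\log x)$ such primes and, via Linnik's theorem, adjoins two auxiliary primes $q_1,q_2$ (Lemma~\ref{lem:consQ2}) to force $n=sq_1q_2\equiv 1\pmod{M_1}$ and $n\equiv -1\pmod{M_2}$. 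This yields $p-1\mid n-1$ and $p+1\mid n+1$ for every $p\mid s$, so the local count $L_2^{--}(n,p)$ is the full $\tfrac12(p^2-1)-O(p)$, giving $L_2^-(n)\ge x^{2-o(1)}$ for each of $x^{1-\alpha^{-1}-o(1)}$ choices of $s$. Your congruence idea survives in the condition $n\equiv -1\pmod{M_2}$, but imposing it on $n$ through auxiliary Linnik primes rather than on each $p$ is precisely what lifts the liar count from $n^{1-o(1)}$ to $n^{2-o(1)}$.
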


As a comparison, if $n$ is prime then the size of $L_2(n)$ is $(n-1)^2$, 
$L_2^+(n) = \frac{1}{2}(n-1)(n-2)$,  and $L_2^-(n) = \frac{1}{2}n(n-1)$.  Thus the average count of liars 
for composites is rather large.

\begin{rmk}
We obtain the same results if we restrict to composite $n$ 
coprime to some fixed value.
\end{rmk}

These theorems count pairs $(f(x), n)$ where $n \leq y$ and $n$ is a degree-$2$ Frobenius pseudoprime 
with respect to $f(x)$.  We thus have the following corollary on the average count of degree-$2$ Frobenius pseudoprimes 
with Jacobi symbol $-1$.

\begin{corollary} \label{cor:psp_count}
Suppose $\alpha$ satisfies the conditions outlined in Theorem \ref{thm:th2}.  Then as $y\rightarrow \infty$ we have
$$
\sum_{a,b \leq y} {\rm Frob}_2^-(y, x^2+ax+b) \geq y^{3-\alpha^{-1}-o(1)} \enspace .
$$
\end{corollary}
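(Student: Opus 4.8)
The plan is to deduce the corollary directly from Theorem~\ref{thm:th2} by rewriting the double sum over integer polynomials as a sum over the modulus $n$, exploiting that the Frobenius test with respect to $x^2+ax+b$ depends on the integers $a,b$ only through their residues modulo $n$.

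First I would note that every clause in the definition of a degree-$2$ Frobenius pseudoprime with respect to $f(x)=x^2+ax+b$ (integer divisibility, factorization, Frobenius, and Jacobi) is phrased purely in terms of $f_0(x)=f(x)\bmod n$ and arithmetic in $(\Z/n\Z)[x]$. Consequently, for integers $a,b$ and composite $n\leq y$, we have $n\in{\rm Frob}_2^-(y,x^2+ax+b)$ if and only if $x^2+\bar{a}x+\bar{b}\in L_2^-(n)$, where $\bar{a},\bar{b}$ denote the reductions of $a,b$ modulo $n$. Summing over $a,b$ and interchanging the order of summation gives
$$
\sum_{a,b\leq y}{\rm Frob}_2^-(y,x^2+ax+b)=\sum_{\substack{n\leq y\\ n\text{ composite}}}N(n),
$$
where $N(n)$ is the number of pairs $(a,b)$ with $1\leq a,b\leq y$ such that $x^2+\bar{a}x+\bar{b}\in L_2^-(n)$.

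Next I would bound $N(n)$ from below. Since $n\leq y$, the set $\{1,\dots,\floor{y}\}$ contains a complete residue system modulo $n$; in fact each residue class is represented at least $\floor{y/n}\geq 1$ times. Hence every one of the $L_2^-(n)$ liar classes $(\bar{a},\bar{b})$ is attained by at least one (indeed by at least $\floor{y/n}^2$) pairs $(a,b)$ in the summation range, so $N(n)\geq L_2^-(n)$. Combining this with the displayed identity and the lower bound of Theorem~\ref{thm:th2},
$$
\sum_{a,b\leq y}{\rm Frob}_2^-(y,x^2+ax+b)\geq\sum_{\substack{n\leq y\\ n\text{ composite}}}L_2^-(n)\geq y^{3-\alpha^{-1}-o(1)},
$$
and dividing by $y^2$ yields $\frac{1}{y^2}\sum_{a,b\leq y}{\rm Frob}_2^-(y,x^2+ax+b)\geq y^{1-\alpha^{-1}-o(1)}$, as claimed.

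Given Theorem~\ref{thm:th2}, this is essentially bookkeeping, so I do not expect a serious obstacle. The one point I would handle with care is the precise correspondence being invoked: that $L_2^-(n)$ counts monic degree-$2$ polynomials over $\Z/n\Z$ — equivalently, pairs $(\bar{a},\bar{b})\in(\Z/n\Z)^2$ — passing the test with $\jacs{\Delta}{n}=-1$, so that the passage from integer polynomials to polynomials mod $n$ neither over- nor undercounts. The recorded prime-case value $L_2^-(n)=\tfrac12 n(n-1)$ serves as a sanity check on this normalization, and one should also observe that $\floor{y/n}\geq 1$ fails precisely for $n>y$, which is exactly why the outer sum must be (and is) restricted to $n\leq y$.
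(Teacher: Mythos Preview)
Your argument is correct and matches the paper's approach: the paper does not write out a separate proof, but the sentence immediately preceding the corollary (``These theorems count pairs $(f(x), n)$ where $n \leq y$ and $n$ is a degree-$2$ Frobenius pseudoprime with respect to $f(x)$'') is exactly the double-counting you have spelled out in detail. Your care in checking that each residue pair $(\bar a,\bar b)$ is hit at least once by integer pairs $(a,b)$ with $1\le a,b\le y$ is the only nontrivial bookkeeping step, and it is handled correctly.
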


In \cite[Section 8]{Grantham01}, Grantham offers \$6.20 for exhibiting a Frobenius pseudoprime with respect to $x^2+5x+5$
that is congruent to $2$ or $3$ modulo $5$.  The proper generalization for these Grantham challenge pseudoprimes 
are the sets ${\rm Frob}_2^-(x^2+ax+b)$, since the condition of being $2,3 \pmod{5}$ is equivalent to 
$\jacs{5}{n} = -1$.  
Grantham later proved \cite[Theorem 2.1]{Grantham10} that the sets ${\rm Frob}(f(x))$ are infinite for
all monic, squarefree polynomials $f(x) \in \Z[x]$, but his construction is limited to composite $n$ 
for which $(\Delta \mid n) = 1$ and $(\Delta \mid p) = 1$ for all $p \mid n$.  Our work is limited 
to degree $2$ polynomials, but expands the cases to include Jacobi symbol $-1$ both for $n$ and for $p \mid n$.
Corollary \ref{cor:psp_count} is consistent with the conjecture that the sets ${\rm Frob}_2^-(f(x))$ are 
infinite as well, and provides good evidence that there are infinitely many Grantham challenge pseudoprimes.

Further motivation for the present work comes from other challenge pseudoprimes.
PSW challenge pseudoprimes \cite[Section A12]{Guy04}, also known as \$620 problem numbers, 
 are composite $n$ that are simultaneously 
base-$2$ Fermat pseudoprimes, Fibonacci pseudoprimes, and congruent to $2,3$ modulo $5$.
Potentially even more rare are Baillie pseudoprimes \cite[Section 6]{BaillieWagstaff01}
(also called Baillie-PSW pseudoprimes due to the challenge posed in \cite[Section 10]{PSW01}), 
composite $n$ 
that are simultaneously base-$2$ strong pseudoprimes and strong Lucas pseudoprimes with respect to a polynomial 
$x^2-Px+Q$ chosen in a prescribed way to ensure $\jacs{P^2-4Q}{n} = -1$.
Though it is unresolved whether these sought-after numbers 
are Frobenius pseudoprimes, strong Frobenius pseudoprimes, or
something more restrictive still, quadratic Frobenius pseudoprimes provide a natural generalization 
for the types of conditions requested.

From this we conclude that the division of $L_2(n)$ into $\jacs{\Delta}{n} = \pm 1$ cases is of fundamental importance, 
and in particular that bounding $\sum_{n \leq y} L_2^-(n)$ is of strong interest.

Since ${\rm Frob}_2(x^2-Px+Q)$ is a subset of the set of $(P,Q)$-Lucas pseudoprimes 
\cite[Theorem 4.9]{Grantham01}, Corollary \ref{cor:psp_count} gives an immediate lower bound
 on the average count 
of Lucas pseudoprimes.  We do not explore the connection to Lucas pseudoprimes further in this work.

\section{Degree-$2$ Frobenius pseudoprimes}

This work focuses on the degree $2$ case.  We reproduce the definition and give some basic facts 
about Frobenius pseudoprimes and liars.  From now on $n$ will be an odd composite natural number.

\begin{definition} \label{def:2Frob}
Let $f(x) \in \Z[x]$ be a degree $2$ monic polynomial with discriminant $\Delta$, and let $n$ be an odd composite.
Then $n$ is a degree-$2$ Frobenius pseudoprime with respect to $f(x)$ if the following four conditions hold.  
\begin{enumerate}
\item (Integer Divisibility) We have $\gcd(n, f(0)\Delta) = 1$.

\item (Factorization) 
Let $F_1(x) = \gcmd(x^n-x, f(x))$, $f_1(x) = f(x)/F_1(x)$, $F_2(x) = \gcmd(x^{n^2}-x, f_1(x))$, 
and $f_2(x) = f_1(x)/F_2(x)$.  All these polynomials exist and $f_2(x) = 1$.

\item (Frobenius) We have $F_2(x) \mid F_2(x^n)$.

\item (Jacobi) We have $(-1)^S =  \jacs{\Delta}{n}$, where $S = \deg(F_2(x))/2$.
\end{enumerate}
Alternatively, in this case we call $f(x)$ 
a degree-$2$ Frobenius liar with respect to $n$.
\end{definition}

The first condition ensures that $\Delta \neq 0$ and $0$ is not a root of $f(x)$.  Since the discriminant is 
nonzero, $f(x)$ is squarefree.  Thus the roots of $f(x)$ are nonzero and distinct modulo $p$ for all $p \mid n$.

\begin{example}
Consider $f(x) = x^2-1$ with $\Delta = 4$.  If $n$ is odd, $F_1(x) = f(x)$ and $F_2(x) = 1$, 
so the Frobenius step is trivially satisfied.  Since $S = 0$, $n$ will be a Frobenius pseudoprime as long as 
$ \jacs{\Delta}{n} = 1$.  Since $4$ is a square modulo $n$ for all $n \geq 5$, we conclude that all odd $n \geq 5$ 
have at least one degree-$2$ Frobenius liar.
\end{example}

\begin{example}
Next consider $f(x) = x^2+1$ with $\Delta = -4$.  Observe that $n = 1 \pmod{4}$ if and only if 
$(-1)^{(n-1)/2} = 1$, which is true if and only if $\gcmd(x^n-x, f(x)) \neq 1$.  In this case $F_2(x) = 1$
and $(-1)^S = 1 = \jacs{-1}{n} =  \jacs{\Delta}{n}$.  In the other case, $n = 3 \pmod{4}$ if and only if 
$\gcmd(x^n-x, f(x)) = 1$.  However, $(-1)^{(n^2-1)/2} = 1$ and so $\gcmd(x^{n^2}-x, f(x)) = f(x)$.
For the Frobenius step, we know $x^2+1 \mid x^{2n}+1$ since if $a$ is a root of $x^2+1$, $n$ odd implies 
that $(a^2)^n = -1$ and hence $a$ is also a root of $x^{2n}+1$.  Finally, the Jacobi step is satisfied since 
$(-1)^S = -1 =  \jacs{\Delta}{n}$.  
This demonstrates that $x^2+1$ is also a liar for all odd 
composite $n$.  The minimum number of degree-$2$ Frobenius liars for odd composite $n$
is in fact $2$, first achieved by $n=15$.
\end{example}


If we fix $n$ and instead restrict to liars with $ \jacs{\Delta}{n} = -1$ then it is possible that no such liars exist.
See Section \ref{subsec:vanish} for a more in-depth discussion of this case.

We next give several reinterpretations of the conditions under which a number $n=\prod_i p_i^{r_i}$ is a degree-$2$ Frobenius pseudoprime with respect to a polynomial $f$.
We treat cases $\jac{\Delta}{n} = +1$ and $\jac{\Delta}{n} = -1$ separately.

\subsection{The case $\jac{\Delta}{n} = +1$}

Supposing we already know that $\jac{\Delta}{n} = +1$, $n$ is a degree-$2$ Frobenius pseudoprime with respect to $f(x)$ if and only if
\begin{enumerate}
\item (Integer Divisibility) we have $\gcd(n, f(0)\Delta) = 1$, and 
\item (Factorization) $\gcmd(x^n-x, f(x)) = f(x) \pmod{n}$.
\end{enumerate}
All other conditions follow immediately. In particular, because $f(x) \mid x^n-x$ modulo $n$, it is not possible for the Euclidean algorithm to discover any non-trivial factors of $n$. We observe that these conditions can be interpreted locally, giving us the following result.

\begin{proposition} \label{newdef_plus}
Positive integer $n = \prod_i p_i^{r_i}$ satisfies Definition \ref{def:2Frob} in the case $\jacs{\Delta}{n} = 1$ if and only if 
\begin{enumerate}
\item (Integer Divisibility) $\Delta$ is a unit modulo $n$ and $0$ is not a root of $f(x)$ modulo $p_i$ for all $i$, and 
\item (Factorization) $\gcmd(x^n-x, f(x)) = f(x) \pmod{p_i^{r_i}}$ for all $i$.
\end{enumerate}
\end{proposition}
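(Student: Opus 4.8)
The plan is to reduce to the two-condition reformulation recorded just above the statement and then observe that each of those two conditions is genuinely local, using the Chinese Remainder isomorphism $(\Z/n\Z)[x] \cong \prod_i (\Z/p_i^{r_i}\Z)[x]$.

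First I would justify the reduction (the ``all other conditions follow immediately'' remark). If $f(x) \mid x^n - x \pmod n$ then $F_1(x) = \gcmd(x^n-x,f(x)) = f(x)$, so $f_1(x) = 1$, hence $F_2(x) = 1$ and $f_2(x) = 1$; the Frobenius step then reads $1 \mid 1$ and the Jacobi step reads $(-1)^0 = 1 = \jac{\Delta}{n}$, both satisfied. Conversely, if $n$ meets Definition \ref{def:2Frob} with $\jac{\Delta}{n} = 1$, then $S = \deg(F_2(x))/2$ is forced even while $\deg F_2(x) \le \deg f(x) = 2$, so $F_2(x) = 1$, whence $f_1(x) = 1$ and $F_1(x) = f(x)$, i.e.\ $\gcmd(x^n-x,f(x)) = f(x) \pmod n$. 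Thus it suffices to localize (i) $\gcd(n, f(0)\Delta) = 1$ and (ii) $\gcmd(x^n-x,f(x)) = f(x) \pmod n$.

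Condition (i) is immediate: $\gcd(n, f(0)\Delta) = 1$ iff $\gcd(n,\Delta) = 1$ and $\gcd(n,f(0)) = 1$; the first says exactly that $\Delta$ is a unit modulo $n$, and the second holds iff $p_i \nmid f(0)$ for every $i$, i.e.\ iff $0$ is not a root of $f(x)$ modulo $p_i$ for every $i$. For condition (ii), the key observation is that, because $f(x)$ is monic, for any modulus $m$ the equality $\gcmd(x^n-x,f(x)) = f(x) \pmod m$ is equivalent to $f(x) \mid x^n-x$ in $(\Z/m\Z)[x]$: the ideal generated by $x^n-x$ and $f(x)$ coincides with the principal monic ideal generated by $f(x)$ precisely when $x^n-x$ lies in it. Now perform polynomial division $x^n - x = f(x)Q(x) + R(x)$ over $\Z/n\Z$ with $\deg R < 2$; this is possible with unique quotient and remainder since $f(x)$ is monic, and it commutes with reduction modulo each $p_i^{r_i}$. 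Hence $f(x) \mid x^n-x$ over every $\Z/p_i^{r_i}\Z$ iff $R \equiv 0 \pmod{p_i^{r_i}}$ for all $i$ iff (Chinese Remainder Theorem) $R \equiv 0 \pmod n$ iff $f(x) \mid x^n-x$ over $\Z/n\Z$. Translating back via the equivalence above yields condition (2) of the proposition.

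The argument is essentially bookkeeping, so I do not expect a genuine obstacle; the only points needing a little care are making the passage between ``$\gcmd$ equals $f$'' and ``$f$ divides $x^n-x$'' precise from the ideal-theoretic definition of $\gcmd$ together with the monicity of $f$, and checking that divisibility by the monic polynomial $f(x)$ respects the Chinese Remainder splitting --- which it does exactly because division by a monic polynomial commutes with reduction modulo each prime power.
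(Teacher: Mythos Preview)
Your proof is correct and follows essentially the same route as the paper's: both directions use the Jacobi condition to force $\deg F_2 = 0$ (hence $F_1 = f$), reduce condition (1) to its prime-by-prime form, and pass the identity $\gcmd(x^n-x,f(x)) = f(x)$ between $\Z/n\Z$ and the $\Z/p_i^{r_i}\Z$ via the Chinese Remainder Theorem. Your version is slightly more explicit than the paper's in justifying the CRT step --- you unpack ``$\gcmd = f$'' as ``$f \mid x^n-x$'' and then track the remainder of division by the monic $f$ --- whereas the paper simply asserts the passage in each direction; but this is additional care rather than a different argument.
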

\begin{proof}
First assume that $n$ is a degree-$2$ Frobenius pseudoprime with respect to $f(x)$ according to Definition \ref{def:2Frob}
and that $\jacs{\Delta}{n} = 1$.  Then $\gcd(n, f(0) \Delta) = 1$, so $\gcd(\Delta, n) = 1$ making $\Delta$ a unit, and 
$\gcd(f(0), n) = 1$.  It follows that $f(0) \neq 0 \pmod{p}$ for all $p \mid n$.

The Jacobi condition in Definition \ref{def:2Frob} along with the assumption that $\jacs{\Delta}{n} = 1$ ensures $S = 0$
and so ${\rm deg}(F_2(x)) = 0$.  All the polynomials in condition (2) are monic, so $F_2(x) = 1$, which implies 
$f_1(x) = 1$, so that $\gcmd(x^n - x, f(x)) = f(x)$.  Since this identity is true modulo $n$, it is true modulo $p_i^{r_i}$ for all $i$.

Conversely, if $\gcmd(x^n - x, f(x)) = f(x) \pmod{p_i^{r_i}}$ for all $i$, then the identity is true modulo $n$ by the Chinese remainder 
theorem.  It follows that $f_1(x) = 1$ and so $F_2(x) = 1$.  Thus condition (2) of Definition \ref{def:2Frob} is true, condition (3)
follows trivially, and condition (4) is true since $S = 0$.

We are assuming that $\Delta$ is a unit modulo $n$, from which it follows that $\gcd(\Delta, n) = 1$.
Furthermore, $f(0) \neq 0 \pmod{p}$ for all $p \mid n$ implies $\gcd(f(0), n) =1$.  Thus condition (1) is satisfied.
\end{proof}

\subsection{The Case $\jac{\Delta}{n} = -1$}

When $\jacs{\Delta}{n} = -1$ we need a couple more conditions.

\begin{proposition} \label{newdef_minus}
Positive integer $n = \prod_i p_i^{r_i}$ satisfies Definition \ref{def:2Frob} in the case $\jacs{\Delta}{n} = -1$ if and only if it satisfies the following conditions:
\begin{enumerate}
\item (Integer Divisibility) discriminant $\Delta$ is a unit modulo $n$ and $0$ is not a root of $f(x)\pmod{p_i}$ for all $i$,
\item (Factorization 1) $\gcmd(x^n-x, f(x)) = 1 \pmod{p_i^{r_i}}$ for all $i$,
\item (Factorization 2) $\gcmd(x^{n^2}-x, f(x)) = f(x) \pmod{p_i^{r_i}}$  for all $i$,
\item (Frobenius) if $\alpha$ is a root of $f(x)$ modulo $p_i^{r_i}$, then so too is $\alpha^n$ for all $i$.
\end{enumerate}
In particular, these conditions are sufficient to ensure that $\gcmd(x^n-x, f(x))$ and $\gcmd(x^{n^2}-x, f(x))$ exist modulo $n$.
\end{proposition}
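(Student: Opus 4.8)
The plan is to prove the biconditional in two directions, with the Chinese Remainder Theorem isomorphism $(\Z/n\Z)[x] \cong \prod_i (\Z/p_i^{r_i}\Z)[x]$ doing most of the work: it turns each ``$\gcmd$ equals something modulo $n$'' statement (which is an ideal identity in $(\Z/n\Z)[x]$) into the conjunction of the corresponding local ideal identities, and conversely assembles local $\gcmd$s into a global one. The only other ingredient is the trivial observation that, for a degree-$2$ polynomial $f$, the quantity $S = \deg(F_2(x))/2$ in Definition \ref{def:2Frob} lies in $\{0,1\}$.

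For the forward direction, suppose $n$ satisfies Definition \ref{def:2Frob} with $\jac{\Delta}{n} = -1$. Condition (1) of the proposition is immediate from $\gcd(n, f(0)\Delta) = 1$: this forces $\Delta$ to be a unit modulo $n$ and $\gcd(f(0), n) = 1$, so $0$ is not a root of $f$ modulo any $p_i$. The Jacobi condition gives $(-1)^S = -1$, so $S$ is an odd positive integer; since $\deg(F_2(x)) \le \deg(f(x)) = 2$ forces $S \le 1$, we get $S = 1$ and $\deg(F_2(x)) = 2$. Because $f_2(x) = 1$ we have $f_1(x) = F_2(x)$, hence $\deg(f_1(x)) = 2$ and $\deg(F_1(x)) = 0$; since $F_1(x)$ is monic, $F_1(x) = 1$, whence $f_1(x) = f(x)$ and $F_2(x) = f(x)$. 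Thus the identities $\gcmd(x^n - x, f(x)) = 1$ and $\gcmd(x^{n^2}-x, f(x)) = f(x)$ hold modulo $n$; reducing the underlying ideal identities modulo each $p_i^{r_i}$ gives conditions (2) and (3). Finally, condition (3) of Definition \ref{def:2Frob} reads $f(x) \mid f(x^n)$ modulo $n$ (since $F_2(x) = f(x)$), hence modulo each $p_i^{r_i}$; this is precisely the assertion that if $\alpha$ is a root of $f$ in $(\Z/p_i^{r_i}\Z)[x]/\langle f(x) \rangle$ then $f(\alpha^n) = 0$, which is condition (4).

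For the converse, assume conditions (1)--(4) of the proposition. I would first deduce the existence of the two global $\gcmd$s, which simultaneously proves the last sentence of the statement: under CRT the ideal $\langle x^n - x, f(x) \rangle$ in $(\Z/n\Z)[x]$ corresponds to the product of the local ideals $\langle x^n - x, f(x)\rangle$ in $(\Z/p_i^{r_i}\Z)[x]$, each of which is the whole ring by condition (2), so $\gcmd(x^n - x, f(x)) = 1$ modulo $n$; similarly condition (3) makes $\langle x^{n^2}-x, f(x)\rangle$ correspond to $\prod_i \langle f(x)\rangle$, which is the ideal generated by $f(x)$, so $\gcmd(x^{n^2}-x, f(x)) = f(x)$ modulo $n$. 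Unwinding Definition \ref{def:2Frob} with these values: $F_1(x) = 1$, so $f_1(x) = f(x)$, so $F_2(x) = \gcmd(x^{n^2}-x, f(x)) = f(x)$ and $f_2(x) = 1$, giving condition (2) of the definition; condition (3) of the definition, namely $f(x) \mid f(x^n)$ modulo $n$, follows from condition (4) of the proposition via CRT; condition (4) of the definition holds since $S = \deg(F_2(x))/2 = 1$ and $(-1)^1 = -1 = \jac{\Delta}{n}$; and condition (1) of the definition follows from condition (1) of the proposition exactly as before.

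The routine-but-delicate part, and the only place I expect to spend real care, is the local--global dictionary: checking that a $\gcmd$ existing at every prime power $p_i^{r_i}$ really does assemble under CRT to one existing modulo $n$ with the product value, and that the polynomial divisibility $f(x) \mid f(x^n)$ is genuinely equivalent to the ``$\alpha^n$ is again a root'' phrasing of condition (4) once one fixes the meaning of ``root modulo $p_i^{r_i}$'' (a root in the degree-$2$ extension ring $(\Z/p_i^{r_i}\Z)[x]/\langle f(x)\rangle$, the image of $x$ being the canonical one; here $f$ monic makes the division algorithm available, which is all the equivalence needs). No single step is hard; the content is in assembling them consistently.
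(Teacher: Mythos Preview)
Your proposal is correct and follows essentially the same route as the paper: both directions hinge on the observation that $\jac{\Delta}{n}=-1$ forces $S=1$, hence $F_1(x)=1$ and $F_2(x)=f(x)$, after which everything reduces to passing the resulting identities back and forth between $(\Z/n\Z)[x]$ and the local rings $(\Z/p_i^{r_i}\Z)[x]$ via the Chinese remainder theorem. The only cosmetic difference is in the existence-of-$\gcmd$ paragraph: the paper handles the two $\gcmd$s asymmetrically (citing \cite[Proposition~3.4]{Grantham01} for the first and explicitly CRT-patching quotient polynomials $g_i(x)$ for the second), whereas you treat both uniformly via the ideal correspondence under CRT, which is cleaner but amounts to the same thing.
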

\begin{proof}
Following the argument from Proposition \ref{newdef_plus}, condition (1) from Definition \ref{def:2Frob}
holds if and only if $\Delta$ is a unit modulo $n$ and $0$ is not a root of $f(x) \pmod{p_i}$ for all $i$.

Now, if we assume $n$ satisfies Definition \ref{def:2Frob}, then by condition (4) we must have $S = 1$ and hence 
$\deg(F_2(x)) = 2$.  Thus $\gcmd(x^{n^2}-x, f_1(x)) = f(x)$, and since $f_2(x) = 1$ we further have $f_1(x) = f(x)$.
This is only possible if $\gcmd(x^n - x, f(x)) = 1$.  Since these identities hold modulo $n$, they hold modulo $p_i^{r_i}$ 
for all $i$.  Finally, $F_2(x) \mid F_2(x^n)$ means $f(x) \mid f(x^n) \pmod{n}$ and hence that $\alpha^n$ 
is a root of $f(x)$ modulo $p_i^{r_i}$ whenever $\alpha$ is.

Conversely, assume $n$ satisfies conditions (2), (3), (4) from the statement of the proposition.  
By the Chinese remainder theorem, conditions (2) and (3) mean that $\gcmd(x^n-x, f(x)) = 1 \pmod{n}$
and $\gcmd(x^{n^2}-x, f(x)) = f(x) \pmod{n}$.  In the language of Definition \ref{def:2Frob}, we have $F_1(x) = 1$, 
$F_2(x) = f(x)$, and $f_2(x) = 1$ as required.  It follows that the Jacobi step is satisfied.  And finally,
condition (3) means that $f(x) \mid f(x^n) \pmod{p_i^{r_i}}$ for all $i$, and so the Frobenius step is satisfied modulo $n$.

If all $\gcmd$ calculations exist modulo $n$, then they exist modulo $p_i^{r_i}$ for all $i$, 
so to finish the proof we need to show that the latter condition is sufficient to ensure 
$\gcmd(x^n-x, f(x))$ and $\gcmd(x^{n^2}-x, f(x))$ exist.  Since $\gcmd(x^n-x, f(x)) = 1 \pmod{p_i^{r_i}}$
for all $i$, by \cite[Proposition 3.4]{Grantham01} we know that $\gcmd(x^n-x, f(x)) = 1 \pmod{n}$
and thus exists.  If $\gcmd(x^{n^2}-x, f(x)) = f(x) \pmod{p_i^{r_i}}$, then $p_i^{r_i}$ divides 
$x^{n^2}-x - f(x)g_i(x)$ for some polynomial $g_i(x)$.  However, using the Chinese remainder theorem 
on each coefficient in turn, we can construct  a polynomial $g(x) \in (\Z/n\Z)[x]$ such that 
$g(x) = g_i(x) \pmod{p_i^{r_i}}$ for all $i$.  Then for all $i$, $p_i^{r_i}$ divides 
$x^{n^2}-x - f(x)g(x)$ and hence $n$ divides $x^{n^2}-x-f(x)g(x)$.  This shows that 
$\gcmd(x^{n^2}-x, f(x)) = f(x) \pmod{n}$, and in particular that it exists.
\end{proof}

\begin{rmk}
Earlier we commented that if the $\gcmd$ does not exist, the Euclidean algorithm will find a factor of $n$.
Here we note that even if the $\gcmd$ exists, the Euclidean algorithm might detect a factor of $n$ 
while computing it.  For example, consider $n = 14$ and $\gcmd(x^{14}-x, x^2+x+1)$ modulo $14$.
When dividing $x^{14}-x$ by $x^2+x+1$ we get a remainder of $-2x - 1$, so the second step of the Euclidean 
algorithm will attempt to invert $2$ and find $2$ as a factor of $14$.  However, 
$4(x^2+x+1) + (2x+1)(-2x - 1) = 3$, a unit modulo $14$, and thus $\gcmd(x^{14}-x, x^2+x+1) = 1$.

That said, for the calculations involved in checking for degree-$2$ Frobenius pseudoprimes this failure mode 
can only happen in the  $\jac{\Delta}{n} = -1$ case and only if either $n$ is even or if one of the conditions (1-4) of Proposition \ref{newdef_minus} would already fail. When $n$ is even, it will only discover a power of $2$ (and the complementary factor).  The rest of this remark justifies these claims.

First, assume the Euclidean algorithm would discover factors of $n$.  If the Factorization $1$ and Factorization $2$ conditions are passed then it implies there exist primes $p_i$ and $p_j$, such that at some iteration of the Euclidean algorithm to compute $\gcmd(x^n-x, f(x))$, the degrees of the polynomials being considered differ.

We note that given $\gcmd(x^n-x, f(x)) = 1 \pmod{n}$ we must have for each $p \mid n$ that
\[ x^n-x = f(x)g(x) + ax+b \pmod{p} \]
where either $a=0$ and $b$ is a unit, or $a$ is a unit.
However, if $a=0$, then condition (Frobenius) implies the roots of $f(x)$ modulo $p$ are $\alpha$ and $\alpha+b$. But this can only happen for $p=2$.
In particular, if $n$ is odd, then we must have that $a\neq 0$ is a unit for all $p \mid n$ and thus
\[ x^n-x = f(x)g(x) + ax+b \pmod{n} \enspace . \]
Given that $\gcmd(x^n-x, f(x)) = 1 \pmod{n}$ we then have that
\[ f(x) = (ax+b)h(x) + e \pmod{n} \]
where $e$ is a unit.
It follows that the only possible discrepancy between $p_i$ and $p_j$ is if one of the primes is $2$.

Finally, the Euclidean algorithm will not discover a factor of $n$ while computing $\gcmd(f(x),g(x))$ if the result is $f(x)$. 
\end{rmk}

\section{Monier formula for degree-$2$ Frobenius pseudoprimes}

In this section we give explicit formulas, analogous to those of Monier \cite[Proposition 1]{Monier01}, 
for the quantity $L_2(n)$ of polynomials $f(x)$ modulo $n=\prod_i p_i^{r_i}$ for which $n$ is a 
degree-$2$ Frobenius pseudoprime.
The key step will be reinterpreting the conditions of the previous section in terms of conditions on the 
roots $\alpha$ and $\beta$ of $f(x)$ modulo $p_i^{r_i}$ for each $i$.

As in the previous section, it shall be useful to distinguish the cases $\jac{\Delta}{n} = \pm1$, and as such we will give separate formulas for $L_2^\pm(n)$.

\begin{nota}
For each fixed value of $n$, denote by $L_2^+(n)$ the total number of quadratic polynomials $f \pmod{n}$ such that $(f,n)$ is a liar pair and $\jac{\Delta}{n} = +1$.

For each fixed value of $n$, denote by $L_2^-(n)$ the total number of quadratic polynomials $f \pmod{n}$ such that $(f,n)$ is a liar pair and $\jac{\Delta}{n} = -1$.
\end{nota}

At the heart of the formula is the size and structure of the ring $R:= (\Z/p^r\Z)[x]/\langle f(x) \rangle$, so we spend 
a little time discussing some basic facts.

Recall that in the case where $r=1$, if $\jac{\Delta}{p} = 1$ then $R \simeq \F_p \times \F_p$ and $|R^{\times}| = (p-1)^2$, 
while if $\jac{\Delta}{p} = -1$ then $R \simeq \F_{p^2}$ and $R^{\times}$ is cyclic of order $p^2-1$.  
When $r > 1$ we have the canonical surjective homomorphism 
$$
\phi:  (\Z/p^r\Z)[x]/\langle f(x) \rangle \rightarrow (\Z/p\Z)[x]/\langle f(x) \rangle
$$
and a similar map on the unit groups.  Furthermore, $f(x)$ will split in $\Z/p^r\Z$ if and only if $\jac{\Delta}{p}=1$.
Thus $|R^{\times}| = p^{2r-2}(p-1)^2$ if $\jac{\Delta}{p} = 1$ and $|R^{\times}| = p^{2r-2}(p^2-1)$ if $\jac{\Delta}{p} = -1$.  In the latter case, since $R^{\times}$ maps surjectively onto a cyclic group 
of order $p^2-1$ with kernel a $p$-group, it has a cyclic subgroup $C$ of order $p^2-1$.  This fact follows 
from the fundamental theorem of abelian groups \cite[Exercise 1.43]{Lang02}, and implies that 
there is a section of $\phi$ yielding a bijective homomorphism from $C$ to $(\Z/p\Z)[x]/\langle f(x) \rangle$.

\subsection{The case $\jac{\Delta}{n} = +1$}

We note that in this case there must be an even number of primes $p_i$ for which $r_i$ is odd and $\jac{\Delta}{p} = -1$.

In order to count the number of $f(x)$ modulo $n$, we shall count for each $i$ the number of modulo $p_i^{r_i}$ liars for which 
$\jac{\Delta}{p} = \pm1$. By the Chinese remainder theorem, the desired count is then the product for all combinations which ensure the above parity condition.  

\begin{lemma} \label{lem:L2++}
Suppose $p^r||n$. The number of degree $2$ polynomials over  $(\Z/p^{r}\Z)$ with $\jac{\Delta}{n} = +1$ and $\jac{\Delta}{p} = +1$ 
for which $n$ is a quadratic Frobenius pseudoprime at $p$ is exactly
\[ L_2^{++}(n,p) = \frac{1}{2}\left(\gcd(n-1,p-1)^2 - \gcd(n-1,p-1)\right) \enspace . \]
\end{lemma}
\begin{proof}
Referring to Proposition \ref{newdef_plus}, $\gcmd(x^n - x, f(x)) = f(x) \pmod{p^r}$ means that $\alpha^n = \alpha$
and $\beta^n = \beta$ modulo $p^r$ for roots $\alpha, \beta$ of $f(x)$.  In addition, the roots are distinct and nonzero
by the integer divisibility condition.

The group $(\Z/p^r\Z)^{\times}$ is cyclic, so it has $\gcd(n-1, p^{r-1}(p-1)) = \gcd(n-1,p-1)$
 elements whose order divides both $n-1$ and $p-1$.
Choosing two such elements, which are not congruent modulo $p$, gives the result.
\end{proof}

\begin{lemma}
Suppose $p^r||n$. The number of degree $2$ polynomials over  $(\Z/p^{r}\Z)$ with $\jac{\Delta}{n} = +1$ and $\jac{\Delta}{p} = -1$ 
for which $n$ is a quadratic Frobenius pseudoprime at $p$ is exactly
\[ L_2^{+-}(n,p) = \frac{1}{2}\left(\gcd(n-1,p^2-1) - \gcd(n-1,p-1)\right) \enspace . \]
\end{lemma}
\begin{proof}
We again refer to Proposition \ref{newdef_plus}.  Since $\jac{\Delta}{p} = -1$, $R:= (\Z/p^r\Z)[x]/\langle f(x) \rangle$
maps surjectively onto $\F_{p^2}$ and the cofactor has size $p^{2r-2}$.  Furthermore, the distinct, nonzero roots 
$\alpha, \beta$ of $f(x)$ are not lifts of elements of $\F_p$, and $\alpha^{p^r} = \beta \pmod{p^r}$.
The factorization condition implies that $\alpha^n = \alpha \pmod{p^r}$,
so that the order of $\alpha$ in $R^{\times}$ divides $n-1$.

All elements of $R^{\times}$ have order dividing $p^{2r-2}(p^2-1)$.
Hence the number of options for $\alpha$ is exactly $\gcd(p^2-1,n-1)-\gcd(p-1,n-1)$, and we divide by $2$ since 
the polynomial $f(x)\pmod{p^r}$ is symmetric in $\alpha$ and $\beta$.
\end{proof}

In order to capture the requirement that we have an even number of contributions from primes where $\jac{\Delta}{p} = -1$ with $r_i$ odd, we anti-symmetrize with respect to these terms to obtain the formula for $L_2^+(n)$.  

\begin{theorem} \label{thm:L2+}
The number of degree $2$ polynomials over  $(\Z/n\Z)$ with $\jac{\Delta}{n} = +1$ 
for which $n$ is a quadratic Frobenius pseudoprime is exactly
\begin{align*}
\frac{1}{2}\prod_{i}\left( L_2^{++}(n,p_i)+ L_2^{+-}(n,p_i) \right)
   +\frac{1}{2}\prod_{2\mid r_i}\left( L_2^{++}(n,p_i)+ L_2^{+-}(n,p_i) \right)  \prod_{2 \nmid r_i} \left( L_2^{++}(n,p_i)- L_2^{+-}(n,p_i) \right)
   \enspace .
\end{align*}
\end{theorem}

\begin{corollary} \label{plusformula}
If $n$ is squarefree, the formula in Theorem \ref{thm:L2+} becomes
\begin{align*}
L_2^+(n)= \frac{1}{2} \prod_{p \mid n}& \frac{1}{2}\left( \gcd(n-1,p^2-1) + \gcd(n-1,p-1)^2  - 2\gcd(n-1,p-1)) \right)\\
  & +  \frac{1}{2}\prod_{p \mid n} \frac{1}{2}\left( \gcd(n-1,p-1)^2 -  \gcd(n-1,p^2-1) ) \right) \enspace .
\end{align*}
\end{corollary}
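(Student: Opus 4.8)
The plan is to derive Corollary~\ref{plusformula} directly from Theorem~\ref{thm:L2+} by specializing to squarefree $n$. When $n$ is squarefree every $r_i = 1$, so the product $\prod_{2 \mid r_i}$ over primes with even exponent is empty (hence equals $1$), and the product $\prod_{2 \nmid r_i}$ ranges over \emph{all} primes $p \mid n$. Thus the second summand in Theorem~\ref{thm:L2+} collapses to $\frac{1}{2}\prod_{p \mid n}\left(L_2^{++}(n,p) - L_2^{+-}(n,p)\right)$, while the first summand is $\frac{1}{2}\prod_{p \mid n}\left(L_2^{++}(n,p) + L_2^{+-}(n,p)\right)$.

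Next I would substitute the closed forms from Lemma~\ref{lem:L2++} and the lemma computing $L_2^{+-}(n,p)$. Writing $g_1 := \gcd(n-1, p-1)$ and $g_2 := \gcd(n-1, p^2-1)$ for brevity, we have $L_2^{++}(n,p) = \tfrac{1}{2}(g_1^2 - g_1)$ and $L_2^{+-}(n,p) = \tfrac{1}{2}(g_2 - g_1)$. Then
\[
L_2^{++}(n,p) + L_2^{+-}(n,p) = \tfrac{1}{2}\left(g_1^2 - g_1 + g_2 - g_1\right) = \tfrac{1}{2}\left(g_2 + g_1^2 - 2g_1\right),
\]
\[
L_2^{++}(n,p) - L_2^{+-}(n,p) = \tfrac{1}{2}\left(g_1^2 - g_1 - g_2 + g_1\right) = \tfrac{1}{2}\left(g_1^2 - g_2\right).
\]
Plugging these into the two collapsed products, and noting that each factor carries its own $\tfrac{1}{2}$, yields exactly the displayed formula, with the outer $\tfrac12$ in each term coming from the outer $\tfrac12$ in Theorem~\ref{thm:L2+}.

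This is essentially a bookkeeping argument, so I do not anticipate a genuine obstacle; the only thing requiring a moment's care is confirming that the empty product convention makes the $\prod_{2 \mid r_i}$ factor equal to $1$ when no prime has even exponent, and — conversely — checking the edge case where $n$ is a prime power (a single prime with $r_i = 1$), where both products are over a single prime and the formula still reads correctly. I would state these conventions explicitly in one sentence and then present the two-line substitution above as the body of the proof.
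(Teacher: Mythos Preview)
Your proposal is correct and is precisely the intended derivation: the paper states this corollary without proof, as it follows immediately from Theorem~\ref{thm:L2+} by the specialization $r_i=1$ for all $i$ and the substitution of the explicit formulas for $L_2^{++}(n,p)$ and $L_2^{+-}(n,p)$, exactly as you outline.
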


\subsection{The case $\jac{\Delta}{n} = -1$}

In this case there must be an odd number of primes $p_i$ for which $r_i$ is odd and 
$\jac{\Delta}{p} = -1$.  As above, the liar count is first computed separately for each $p$.

\begin{lemma}
The number of degree $2$ polynomials over  $(\Z/p^{r}\Z)$ with $\jac{\Delta}{n} = -1$ and $\jac{\Delta}{p} = +1$ 
for which $n$ is a quadratic Frobenius pseudoprime at $p$ is exactly
\[  L_2^{-+}(n,p) = \frac{1}{2}\left(\gcd(n^2-1,p-1) - \gcd(n-1,p-1)\right) \enspace . \]
\end{lemma}
\begin{proof}
Since $\jac{\Delta}{p} = 1$, the roots $\alpha, \beta$ of $f(x)$ are in $(\Z/p^r\Z)$.
Referring to Proposition \ref{newdef_minus}, the roots are distinct and nonzero by the integer divisibility 
condition.  Furthermore, $\gcmd(x^n - x, f(x)) = 1$ means that $\alpha^n \neq \alpha \pmod{p^r}$, 
but we do have $\alpha^{n^2} = \alpha \pmod{p^r}$ by the factorization 2 condition.  The Frobenius 
condition implies $\alpha^n$ is a root of $f(x)$, and thus $\alpha^n = \beta$.

The group $(\Z/p^r\Z)^{\times}$ is cyclic of order $p^{r-1}(p-1)$, and so the number of elements 
with order dividing both $n^2-1$ and $p^{r-1}(p-1)$ is $\gcd(n^2-1, p-1)$.  We subtract off the subset 
of elements with order dividing $n-1$, then divide by $2$ since $f(x) \pmod {p^r}$ is symmetric 
in $\alpha$ and $\beta$.
\end{proof}

\begin{lemma}\label{Lem:L2--}
The number of degree $2$ polynomials over  $(\Z/p^{r}\Z)$ with $\jac{\Delta}{n} = -1$ and $\jac{\Delta}{p} = -1$ 
for which $n$ is a quadratic Frobenius pseudoprime at $p$ is exactly
\[  L_2^{--}(n,p) = \frac{1}{2}\left(\gcd(p^2-1,n^2-1,n-p)- \gcd(n-1,p-1)\right) \enspace . \]
\end{lemma}
\begin{proof}
Since $\jac{\Delta}{p} = -1$, $R:= (\Z/p^r\Z)[x]/\langle f(x) \rangle$ maps surjectively onto $\F_{p^2}$
and $R^{\times}$ has order $p^{2r-2}(p^2-1)$.  Furthermore, roots $\alpha, \beta$ of $f(x)$ are not in $\Z/p^r\Z$, and by the divisibility condition  in Proposition \ref{newdef_minus} we know 
those roots are distinct units modulo $p$.  
The factorization conditions tell us that $\alpha^{n^2} = \alpha \pmod{p^r}$
and the Frobenius condition implies $\alpha^n = \beta \pmod{p^r}$.

We claim a further relation on the roots, namely that $\alpha^p = \beta \pmod{p}$
implies $\alpha^p = \beta \pmod{p^r}$.  If $\alpha^p = \beta \pmod{p}$, then 
$\alpha^p/\beta \in {\rm Ker}(\phi)$, a $p$-group, and thus has order divisible by $p$ unless it is the identity 
in $R^{\times}$.
However, the multiplicative orders of $\alpha, \beta$ divide $n^2-1$, and thus $\alpha^p/\beta \in C$, 
the cyclic group in $R^{\times}$ of order $p^2-1$.  It follows that 
the order of $\alpha^p/\beta$ in $R^{\times}$
must be $1$, making it the identity modulo $p^r$.

We conclude that the order of $\alpha$ in $R^{\times}$ must divide $n^2-1$, $p^{2r-2}(p^2-1)$, and $n-p$.
The number of options for $\alpha$ is thus exactly $\gcd(p^2-1,n^2-1,n-p)-\gcd(p-1,n-1)$, and we 
divide by $2$ since the polynomial $f(x)\pmod{p^r}$ is symmetric in $\alpha$ and $\beta$.
\end{proof}

In order to capture the requirement that we have an odd number of contributions from primes where $\jac{\Delta}{p} = -1$ when $r_i$ is odd, we anti-symmetrize with respect to these terms to obtain the formula for $L_2^-(n)$.  

\begin{theorem} \label{thm:negliars}
The number of degree $2$ polynomials over  $(\Z/n\Z)$ with $\jac{\Delta}{n} = -1$ 
for which $n$ is a quadratic Frobenius pseudoprime is exactly
\begin{align*}
\frac{1}{2}\prod_{i}\left( L_2^{-+}(n,p_i)+ L_2^{--}(n,p_i) \right)
   -\frac{1}{2}\prod_{2\mid r_i}\left( L_2^{-+}(n,p_i)+ L_2^{--}(n,p_i) \right)  \prod_{2\nmid r_i}
  \left( L_2^{-+}(n,p_i) -L_2^{--}(n,p_i) \right) \enspace .
\end{align*}
\end{theorem}

\begin{corollary} \label{minusformula}
If $n$ is squarefree, the formula in Theorem \ref{thm:negliars} becomes
\begin{align*}
 L_2^-(n) = \frac{1}{2} \prod_{p \mid n}&\frac{1}{2}\left(  \gcd(n^2-1,p-1)+ \gcd(n^2-1,p^2-1,n-p) -2\gcd(n-1,p-1) \right)\\& -\frac{1}{2}  \prod_{p \mid n} \frac{1}{2}\left(  \gcd(n^2-1,p-1)- \gcd(n^2-1,p^2-1,n-p)  \right) \enspace .
\end{align*}
\end{corollary}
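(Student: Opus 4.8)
The plan is to derive Corollary~\ref{minusformula} as the squarefree specialization of Theorem~\ref{thm:negliars}, followed by substitution of the two local formulas that precede it.

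First I would use that a squarefree $n=\prod_i p_i$ has every exponent $r_i$ equal to $1$, hence every $r_i$ odd. In the expression of Theorem~\ref{thm:negliars} this means the index set $\{i: 2\mid r_i\}$ is empty while $\{i: 2\nmid r_i\}$ is the full set of prime divisors of $n$. Under the empty-product convention $\prod_{2\mid r_i}(\cdots)=1$, so the theorem's formula collapses to
$$
\frac12\prod_{p\mid n}\bigl(L_2^{-+}(n,p)+L_2^{--}(n,p)\bigr)\;-\;\frac12\prod_{p\mid n}\bigl(L_2^{-+}(n,p)-L_2^{--}(n,p)\bigr).
$$

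Next I would substitute $L_2^{-+}(n,p)=\tfrac12\bigl(\gcd(n^2-1,p-1)-\gcd(n-1,p-1)\bigr)$ and $L_2^{--}(n,p)=\tfrac12\bigl(\gcd(p^2-1,n^2-1,n-p)-\gcd(n-1,p-1)\bigr)$ from Lemma~\ref{Lem:L2--} and the lemma just before it. The pointwise sum equals $\tfrac12\bigl(\gcd(n^2-1,p-1)+\gcd(n^2-1,p^2-1,n-p)-2\gcd(n-1,p-1)\bigr)$, the local factor in the first product of the corollary, and the pointwise difference equals $\tfrac12\bigl(\gcd(n^2-1,p-1)-\gcd(n^2-1,p^2-1,n-p)\bigr)$, the local factor in the second. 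Reassembling the two products with the leading $\tfrac12$ and the intervening minus sign yields the stated identity.

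I expect no real obstacle: once Theorem~\ref{thm:negliars} and the lemmas are available this is bookkeeping. The only items worth remarking are that all $r_i$ being odd is exactly what keeps the anti-symmetrized second term alive as a genuine product over every $p\mid n$ (mirroring the fact that the $\jac{\Delta}{n}=-1$ case requires an odd number of primes with $\jac{\Delta}{p}=-1$), and that each local factor retains its own $\tfrac12$, which is why the corollary carries a $\tfrac12$ inside the products in addition to the outer $\tfrac12$.
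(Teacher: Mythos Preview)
Your proposal is correct and is exactly the intended derivation: the paper states the corollary without proof, treating it as the immediate specialization of Theorem~\ref{thm:negliars} to squarefree $n$ (all $r_i$ odd, empty product equal to $1$) followed by substitution of the two local lemmas. Your bookkeeping of the $\tfrac12$'s and the sum/difference of $L_2^{-+}$ and $L_2^{--}$ is accurate; note that the corollary as printed appears to be missing a parenthesis so that the inner $\tfrac12$ in the first product should multiply the entire bracketed expression, matching what you derived and paralleling Corollary~\ref{plusformula}.
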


\subsection{Upper bounds}

In this section we give simpler upper bounds for $L_2^+(n)$ and $L_2^-(n)$, 
which will be needed in Section \ref{sec:upper}.

\begin{lemma}\label{lemma:liarcountupperbounds}
If $n$ is a composite integer then
\begin{align*}
& L_2^+(n) \leq \prod_{p \mid n} \max( \gcd(n-1, p^2-1), \gcd(n-1, p-1)^2) \mbox{ and } \\
& L_2^-(n) \leq \prod_{p \mid n} \gcd(n^2-1, p^2-1) \enspace .
\end{align*}
\end{lemma}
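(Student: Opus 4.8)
The plan is to derive both inequalities directly from the exact formulas in Corollaries \ref{plusformula} and \ref{minusformula} (and, for non-squarefree $n$, from Theorems \ref{thm:L2+} and \ref{thm:negliars}), bounding each local factor crudely and then controlling the anti-symmetrized correction term by the main term. The key observation is that in each theorem the liar count has the shape $\frac12 A + \frac12 B$ where $A = \prod_i (L^{\bullet+}(n,p_i) + L^{\bullet-}(n,p_i))$ is a product of nonnegative local factors and $B$ is a second product whose $i$-th factor is either the same as $A$'s (when $r_i$ is even) or the \emph{difference} $L^{\bullet+}(n,p_i) - L^{\bullet-}(n,p_i)$ (when $r_i$ is odd). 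Since each individual $L^{\bullet\pm}(n,p_i)$ is nonnegative (it counts polynomials), the absolute value of each factor of $B$ is at most the corresponding factor of $A$, hence $|B| \le A$ and therefore $L_2^{\pm}(n) \le \frac12 A + \frac12 |B| \le A$. So it suffices to bound $A$, i.e. to bound each local sum $L^{\bullet+}(n,p) + L^{\bullet-}(n,p)$.

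For the $+1$ case, using Lemma \ref{lem:L2++} and the lemma computing $L_2^{+-}(n,p)$, the local sum is
\[
L_2^{++}(n,p) + L_2^{+-}(n,p) = \tfrac12\bigl(\gcd(n-1,p-1)^2 - \gcd(n-1,p-1)\bigr) + \tfrac12\bigl(\gcd(n-1,p^2-1) - \gcd(n-1,p-1)\bigr).
\]
I would bound this above by $\max\bigl(\gcd(n-1,p-1)^2, \gcd(n-1,p^2-1)\bigr)$: the two ``$\gcd$-squared'' and ``$\gcd$ of $p^2-1$'' pieces are each at most that maximum, their half-sum is at most the maximum, and subtracting $\gcd(n-1,p-1)$ only helps. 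Taking the product over $p \mid n$ gives exactly the claimed bound on $L_2^+(n)$. (One should note the harmless point that when $r_i$ is odd the factor $L_2^{++}-L_2^{+-}$ could in principle be negative, but its absolute value is still at most the max above, which is all we need.)

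For the $-1$ case, using the two lemmas giving $L_2^{-+}(n,p)$ and $L_2^{--}(n,p)$ (Lemma \ref{Lem:L2--}), the local sum is
\[
L_2^{-+}(n,p) + L_2^{--}(n,p) = \tfrac12\bigl(\gcd(n^2-1,p-1) - \gcd(n-1,p-1)\bigr) + \tfrac12\bigl(\gcd(p^2-1,n^2-1,n-p) - \gcd(n-1,p-1)\bigr).
\]
Here I would bound each of $\gcd(n^2-1,p-1)$ and $\gcd(p^2-1,n^2-1,n-p)$ above by $\gcd(n^2-1,p^2-1)$ — the first because $p-1 \mid p^2-1$, the second because it is a $\gcd$ of a set that includes $p^2-1$ and $n^2-1$ — so the half-sum is at most $\gcd(n^2-1,p^2-1)$, and again the subtracted terms only decrease the value. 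Multiplying over $p \mid n$ yields $L_2^-(n) \le \prod_{p\mid n}\gcd(n^2-1,p^2-1)$.

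The only real subtlety — and the step I would be most careful about — is justifying the passage $L_2^{\pm}(n) \le A$ uniformly, i.e. that the correction term never pushes the count above the product of local sums. This is immediate once one records that every $L^{\bullet\pm}(n,p_i)$ is a cardinality and hence $\ge 0$, so that for each $i$ both $L^{\bullet+}(n,p_i)+L^{\bullet-}(n,p_i) \ge 0$ and $|L^{\bullet+}(n,p_i)-L^{\bullet-}(n,p_i)| \le L^{\bullet+}(n,p_i)+L^{\bullet-}(n,p_i)$; multiplying these termwise gives $|B| \le A$. I would also remark that for squarefree $n$ one may instead just read the bound off Corollaries \ref{plusformula} and \ref{minusformula} directly, but phrasing the argument via Theorems \ref{thm:L2+} and \ref{thm:negliars} has the advantage of covering all composite $n$ at once, matching the statement of the lemma.
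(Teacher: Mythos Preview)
Your proof is correct and follows the same strategy as the paper: bound $L_2^{\pm}(n)$ by $\prod_{p\mid n}\bigl(L_2^{\bullet+}(n,p)+L_2^{\bullet-}(n,p)\bigr)$ via the Monier-type formulas, then estimate each local factor. If anything your version is more careful --- the paper's displayed intermediate step $L_2^+(n)\le\prod_i\max\bigl(L_2^{++}(n,p_i),L_2^{+-}(n,p_i)\bigr)$ is not literally valid as stated, whereas your $|B|\le A$ argument yields the correct bound by the product of local \emph{sums}, which is then dominated termwise by the claimed maxima.
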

\begin{proof}
For each prime factor $p$ of $n$, we choose the greater of $L_2^{++}(n,p)$
and $L_2^{+-}(n,p)$.  That is,
$$
L_2^+(n) \leq \prod_i 2 \cdot \max(L_2^{++}(n, p_i), L_2^{+-}(n,p_i)) 
 \leq  \prod_{p \mid n} \max( \gcd(n-1,p-1)^2, \gcd(n-1,p^2-1)) \enspace .
$$

For $L_2^-(n)$ a similar argument gives the simpler upper bound  
$$
\prod_{p \mid n} \max( \gcd(n^2-1, p^2-1, n-p), \gcd(n^2-1, p-1)) 
\leq \prod_{p \mid n} \gcd(n^2-1, p^2-1) \enspace .
$$
\end{proof}

\subsection{The vanishing of $L_2^-(n)$}\label{subsec:vanish}

A major theme of this work is that odd composites have many quadratic Frobenius 
liars on average, even if we restrict to the case $\jac{\Delta}{n} = -1$.  With this 
in mind, it is useful to note that $L_2^{-}(n)$ can be $0$.  For example, $L_2^-(9) = 0$
and $L_2^-(21) = 0$.

As a first general example, write $n=ps$ with $\gcd(p,s) = 1$.  If the quantities
\[\gcd(p^2-1,n^2-1,n-p)- \gcd(n-1,p-1)  \quad \text{and} \quad \gcd(n^2-1,p-1) - \gcd(n-1,p-1) \]
are both zero, then it is immediate from Theorem \ref{thm:negliars} that $L_2^-(n) = 0$.
These conditions are met if whenever $\ell^r \mid \gcd(p^2-1,n^2-1,n-p)$ or $\ell^r \mid \gcd(n^2-1,p-1)$ we also have $\ell^r \mid \gcd(n-1,p-1)$.
For odd primes $\ell \mid p^2-1$ this is accomplished by the requirement that
\[ s \neq -p^{-1} \pmod \ell \enspace , \] 
as this implies that if $\ell\mid p^2-1$ then $\ell \nmid sp+1$.
For the prime $2$, if we write $p = 1+2^r \pmod{2^{r+1}}$ then the requirement
\[ s = 1+2^r \pmod{2^{r+1}} \]
implies the exact power of $2$ dividing each of 
$\gcd(p^2-1,n^2-1,n-p)$, $\gcd(n^2-1,p-1)$, and $\gcd(n-1,p-1)$ is $2^r$.

A more general example comes from Carmichael numbers, which are squarefree $n$ 
with $\gcd(n-1, p-1) = p-1$ for all primes $p \mid n$.

\begin{rmk}
If $n$ is a classical Carmichael number, then $L_2^{-+}(n,p) =0$ for all $p$ and 
\[ L_2^-(n) =  \prod_{p \mid n} \frac{1}{2} \left( \gcd(n^2-1,p^2-1,n-p) -\gcd(n-1,p-1) \right) \]
if $n$ has an odd number of prime factors, and $0$ otherwise (see Corollary \ref{minusformula}).
In particular, the only $f$ for which $(f,n)$ is a liar pair with $\jac{\Delta}{n} = -1$ have $f$ inert at all primes dividing $n$.
Furthermore, if $n=1\pmod{4}$ then for each $p \mid n$ with $p=3\pmod{4}$ we naively estimate the probability that $L_2^{--}(n,p) = 0$ as $\prod_{\ell \mid p+1}' \frac{\ell-2}{\ell-1}$, where the product is over odd primes $\ell$.
\end{rmk}

As a final example, let $n$ be a rigid Carmichael number of order $2$ in the sense of \cite{Howe00}, so that $n$ is squarefree and 
$p^2-1 \mid n-1$ for every prime factor $p$ of $n$.  Then $\gcd(n^2-1, p^2-1, n-p) = \gcd(n-1, p-1)$
and $\gcd(n^2-1, p-1) = \gcd(n-1, p-1)$, so that $L_2^-(n) = 0$.

\section{Number theoretic background}

\begin{nota}
Let $L$ be an upper bound for Linnik's constant.  That is, the constant $L$ satisfies:
\[ \text{if } (a,m) = 1 \text{ then there exists } p= a \pmod{m} \text{ with } p < m^L \enspace . \]
It is known that $L\leq 5.$ (See \cite[Theorem 2.1]{Xylouris01})

For each value $x$ denote by
$M(x)$  the least common multiple of all integers up to $\frac{\log{x}}{\log\log{x}}$.

For each value $x$ and for each $\alpha > 0$ denote by $P_{\alpha}^{(+)}(x)$ the set
 \[   \left\{ \text{prime} ~ p < (\log{x})^{\alpha} ~ \text{such that} ~(p-1) \mid M(x) \right\}  \]
and by $P_{\alpha}^{(-)}(x)$ the set
 \[  \left \{ \text{prime} ~ p < (\log{x})^{\alpha} ~ \text{such that} ~(p^2-1) \mid M(x)  \right\} \enspace .  \]
Now, given functions $M_1(x)$ and $M_2(x)$ of $x$ which satisfy
\[ M(x) = M_1(x)M_2(x) \qquad \text{and} \qquad \gcd(M_1(x),M_2(x)) = 2 \]
we define for each value $x$ and for each $\alpha > 0$ the set
 \[ P_{\alpha}\left(M_1(x),M_2(x),x\right)  = \left\{ \text{prime} ~ p < (\log{x})^{\alpha} ~ \text{such that} ~(p-1) \mid M_1(x) \text{ and } (p+1) \mid M_2(x) \right\} \enspace .  \]
\end{nota}

\begin{proposition} \label{prop:Msize}
We have $M(x) = x^{o(1)}$ as $x\rightarrow \infty$.
\end{proposition}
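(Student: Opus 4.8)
The plan is to reduce the statement to the elementary estimates of Chebyshev. Write $m = m(x) := \lfloor \log x / \log\log x \rfloor$, so that $M(x) = \lcm(1,2,\ldots,m)$. The first step I would take is the standard identity
\[
\log M(x) \;=\; \log \lcm(1,2,\ldots,m) \;=\; \sum_{\substack{p^k \le m \\ p \text{ prime}}} \log p \;=\; \psi(m),
\]
where $\psi$ denotes the second Chebyshev function; this holds because the exponent to which a prime $p$ appears in $\lcm(1,\ldots,m)$ is precisely the largest $k$ with $p^k \le m$.

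Next I would apply Chebyshev's bound $\psi(m) = O(m)$, which is elementary (and for which the prime number theorem gives the sharper $\psi(m) \sim m$). Since $m(x) \to \infty$ as $x \to \infty$, this yields
\[
\log M(x) \;=\; \psi(m(x)) \;=\; O\!\left( \frac{\log x}{\log\log x} \right).
\]
Finally, dividing by $\log x$ gives $\dfrac{\log M(x)}{\log x} = O\!\left( \dfrac{1}{\log\log x} \right) = o(1)$, which is exactly the claim $M(x) = x^{o(1)}$.

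There is essentially no obstacle here; the only subtlety worth flagging is that the trivial bound $\lcm(1,\ldots,m) \le m^{\pi(m)} \le m^m$ is \emph{not} sufficient, since it only gives $\log M(x) = O(\log x)$ rather than the required $o(\log x)$. One genuinely needs an upper bound for $\psi(m)$ of Chebyshev strength (equivalently, the primorial estimate $\prod_{p \le m} p \le 4^m$ together with the routine accounting for higher prime powers); with that input in hand the argument is as above, and it is enough to cite a standard reference for $\psi(m) = O(m)$.
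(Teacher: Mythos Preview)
Your proof is correct, and at bottom it is the same argument as the paper's: both reduce to a Chebyshev-type estimate for primes up to $m = \lfloor \log x/\log\log x\rfloor$. The packaging differs slightly. You invoke the identity $\log\lcm(1,\dots,m)=\psi(m)$ and then $\psi(m)=O(m)$, which is the cleanest route. The paper instead bounds $M(x)$ by $m^{\pi(m)}$ and then asserts this is $x^{o(1)}$; that last step implicitly uses $\pi(m)\log m = O(m)$, i.e.\ Chebyshev's bound $\pi(m)=O(m/\log m)$, which is of the same strength as your $\psi(m)=O(m)$.

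One small caveat about your closing remark: you flag the bound $\lcm(1,\dots,m)\le m^{\pi(m)}$ as ``not sufficient,'' but in fact this is precisely the paper's route, and it \emph{is} sufficient once paired with Chebyshev for $\pi$. What is genuinely insufficient is the further weakening to $m^m$ (equivalently, using only the trivial $\pi(m)\le m$), which as you say gives only $\log M(x)=O(\log x)$. So your warning is right in spirit---some Chebyshev input is unavoidable---but the intermediate bound $m^{\pi(m)}$ should not be lumped in with $m^m$.
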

\begin{proof}
We can estimate $M(x)$ by:
\[ \prod_{p< \frac{\log{x}}{\log\log{x}}}  p^{\floor{\frac{\log\log{x}-\log\log\log{x}}{\log p}}} <  \prod_{p< \frac{\log{x}}{\log\log{x}}} \frac{\log{x}}{\log\log{x}} = \left( \frac{\log{x}}{\log\log{x}} \right)^{\pi\left(\frac{\log{x}}{\log\log{x}}\right)} = x^{o(1)} \enspace . \]
\end{proof}

The next two propositions follow from results on the smoothness of shifted primes.  The conclusion is that the sets 
$P_{\alpha}^{(+)}(x)$ and $P_{\alpha}^{(-)}(x)$ are relatively large.  As a comparison, by the prime number theorem the asymptotic count 
of all primes $p < (\log{x})^{\alpha}$ is $\frac{(\log{x})^{\alpha}}{\alpha \log\log{x}}$.

\begin{proposition}\label{prop:lb1}
For all $\alpha \leq 10/3$ we have that $\abs{P_{\alpha}^{(+)}(x) } \geq (\log{x})^{\alpha-o(1)}$ as $x\rightarrow \infty$.

\end{proposition}
\begin{proof}
This is Theorem 1 of \cite{BakerHarman98} under the assumption that $1/\alpha > 0.2961$.
\end{proof}

\begin{proposition}\label{prop:lb2}
For all $\alpha \leq 4/3$ we have that $\abs{P_{\alpha}^{(-)}(x) } \geq (\log{x})^{\alpha-o(1)}$ as $x\rightarrow \infty$.
\end{proposition}
\begin{proof}
We apply \cite[Theorem 1.2]{DMT01}. The constant $\alpha=4/3$ arises from the formula $(g-1/(2k))^{-1}$ 
because $x^2-1$ has $k = 2$ factors of degree $g=1$, and $1$ is the highest degree among the irreducible 
factors.
\end{proof}

The next proposition is a novel contribution to the theory of pseudoprime construction.
Recall that $\omega(n)$ is the count of distinct prime factors of $n$.

\begin{proposition}\label{prop:lb2X}
Given $\alpha$ such that $\abs{P_{\alpha}^{(-)}(x) } \geq (\log{x})^{\alpha-o(1)}$ as $x\rightarrow \infty$, then there exist $M_1(x)$, $M_2(x)$ such that as $x\rightarrow \infty$ we have
\[  \abs{P_{\alpha}\left(M_1(x),M_2(x),x\right) } \geq (\log{x})^{\alpha-o(1)} \enspace . \]
\end{proposition}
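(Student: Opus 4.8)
The plan is to start from the set $P_{\alpha}^{(-)}(x)$ of primes $p < (\log x)^{\alpha}$ with $(p^2-1)\mid M(x)$, and to show that a large fraction of these primes can be routed into a single set $P_{\alpha}(M_1(x),M_2(x),x)$ by a suitable coprime-ish factorization $M(x)=M_1(x)M_2(x)$ with $\gcd(M_1,M_2)=2$. The key observation is that for $p$ odd, $(p^2-1)\mid M(x)$ already forces $(p-1)\mid M(x)$ and $(p+1)\mid M(x)$ individually; the only obstruction to putting $p$ into $P_{\alpha}(M_1,M_2,x)$ is that for each odd prime power $\ell^e \| M(x)$ we must consistently assign $\ell^e$ to either the ``$M_1$ side'' or the ``$M_2$ side,'' and then $p$ survives only if $(p-1)$ is divisible only by prime powers assigned to $M_1$ and $(p+1)$ only by prime powers assigned to $M_2$. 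Since $\gcd(p-1,p+1)=2$ for $p$ odd, for each odd prime power $\ell^e$ appearing in $M(x)$ at most one of $p-1$, $p+1$ is divisible by $\ell$, so every $p$ is compatible with \emph{some} assignment; the issue is to find one assignment compatible with many $p$ simultaneously.

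First I would set up a random assignment: for each odd prime $\ell$ with $\ell^{e_\ell}\|M(x)$, independently place $\ell^{e_\ell}$ into $M_1$ with probability $\tfrac12$ and into $M_2$ with probability $\tfrac12$ (and always put the full power of $2$ dividing $M(x)$, together with a single factor of $2$ on the other side, so that $\gcd(M_1,M_2)=2$; the precise bookkeeping of the $2$-part is routine). For a fixed $p\in P_{\alpha}^{(-)}(x)$, the event ``$p\in P_{\alpha}(M_1,M_2,x)$'' is exactly the event that every odd prime dividing $p-1$ got assigned to the $M_1$ side and every odd prime dividing $p+1$ got assigned to the $M_2$ side. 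The number of \emph{distinct} primes involved is $\omega(p-1)+\omega(p+1) = O(\log p/\log\log p) = O(\log\log x/\log\log\log x)$, since $p < (\log x)^\alpha$. Hence this event has probability at least $2^{-O(\log\log x/\log\log\log x)} = (\log x)^{-o(1)}$. By linearity of expectation, the expected size of $P_{\alpha}(M_1,M_2,x)$ over the random choice is at least $\abs{P_{\alpha}^{(-)}(x)}\cdot (\log x)^{-o(1)} > (\log x)^{\alpha - o(1)}$, so some fixed choice of $(M_1(x),M_2(x))$ achieves this bound; that choice gives the proposition.

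The main obstacle I anticipate is not the probabilistic argument but the uniformity in $x$: one must make the ``$o(1)$'' genuinely a function of $x$ tending to $0$, which requires the bound $\omega(m) = O(\log m/\log\log m)$ together with $p<(\log x)^\alpha$ to be tracked carefully, and one must check that the $2$-adic normalization $\gcd(M_1,M_2)=2$ does not interfere (it only ever costs a factor of $2$ in the probability for each $p$, absorbed into $o(1)$). A secondary point is that the construction of $(M_1,M_2)$ is non-effective — it is an averaging/pigeonhole existence statement — but that is exactly what the proposition claims, and it matches the style of Propositions \ref{prop:lb1} and \ref{prop:lb2}. One could alternatively phrase the derandomization via a direct counting argument over the $2^{\pi(\log x/\log\log x)}$ assignments, but the expectation argument is cleanest.
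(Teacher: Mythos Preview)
Your argument is correct and is essentially the same as the paper's: both average over the $2^{\pi(\log x/\log\log x)}$ assignments of odd primes to $M_1$ or $M_2$, use $\omega(p^2-1)=O(\log\log x/\log\log\log x)$ to show each $p\in P_\alpha^{(-)}(x)$ survives a $(\log x)^{-o(1)}$ fraction of assignments, and extract a good pair by pigeonhole/expectation. The only cosmetic difference is that you phrase it probabilistically via linearity of expectation, while the paper writes out the double-counting identity $\sum_{M_1,M_2}|P_\alpha(M_1,M_2,x)|=\sum_{p}2^{\omega(M/(p^2-1))}$ directly; you yourself note this equivalence in your final sentence.
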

\begin{proof}
Let $M$ be the fixed choice of $M(x)$ that follows from a fixed choice of $x$.
Each prime $p\in P_{\alpha}^{(-)}(x)$ is also in $P_{\alpha}\left((p-1)d_1,(p+1)d_2,x\right)$ for all pairs $(d_1, d_2)$  satisfying
\[ d_1d_2 = \frac{M}{p^2-1} \quad \text{and} \quad \gcd(d_1,d_2) = 1 \enspace . \]

The number of pairs $(M_1, M_2)$ satisfying the conditions laid out in the notation comment at the beginning 
of the section is
\[ 2^{\pi(\log{x}/\log\log{x})}   \]
since each prime up to $\frac{\log{x}}{\log\log{x}}$ is assigned to either $M_1$ or $M_2$.  
To count the number of choices for $d_1$ and $d_2$ we subtract from the exponent the count of prime factors of $p^2-1$.
This work yields 
\[\sum_{M_1,M_2} \abs{ P_{\alpha}\left(M_1,M_2,x\right) } 
= \sum_{p\in P_{\alpha}^{(-)}(x) } 2^{ \omega\left( \frac{M}{p^2-1} \right) }
> 2^{\pi \left( \frac{\log x}{\log\log x} \right) - \omega_{{\rm max}}(p^2-1)}(\log x)^{\alpha-o(1)} \]
where $\omega_{{\rm max}}(p^2-1)$ denotes the maximum number of distinct prime factors of $p^2-1$ for all $p$ under consideration.

For integers up to $x$, the integer $n$ that maximizes $\omega(n)$ is formed by taking the product of 
all small distinct primes.
 By the argument in \cite[Section 22.10]{HardyWright}, it follows that 
for $n \leq x$ we have $\omega(n) \leq (1 + o(1)) \frac{\log x}{\log\log{x}}$.  Thus

\[ 2^{\omega_{{\rm max}}(p^2-1)}  \leq  
2^{ \frac{ (1 + o(1))\alpha \log\log x}{\log\log( (\log x)^{\alpha})}}
 \leq (\log x)^{o(1)} \enspace . \]

Now, if $\abs{ P_{\alpha}(M_1, M_2, x)} \leq (\log x)^{\alpha - o(1)}$ for all pairs $(M_1, M_2)$ we would conclude 
that 
$$
\sum_{M_1, M_2} \abs{ P_{\alpha}(M_1, M_2, x)} \leq 2^{\pi\left( \frac{\log x}{\log\log x } \right)}(\log{x})^{\alpha - o(1)} \enspace ,
$$
but since this contradicts the earlier lower bound we instead conclude that $\abs{ P_{\alpha}(M_1, M_2, x)} \geq (\log x)^{\alpha - o(1)}$
for at least one pair $(M_1, M_2)$.
\end{proof}

\begin{rmk}
From the proof we expect the result will in fact hold for most choices of $M_1$ and $M_2$.

The proof we have given does not actually imply any relationship between $M_i(x)$ for different values of $x$. In particular, though one perhaps expects that that there exists a complete partitioning of all primes into two sets and that the $M_i$ are simply constructed by considering only those primes in the given range, we do not show this.
\end{rmk}

It is generally expected (see for example \cite[proof of Theorem 2.1]{ErdosPomerance01}) that the values $\alpha$ under consideration can be taken arbitrarily large. In particular we expect the following to hold.
\begin{conj}
In each of the above three propositions, the result holds for all $\alpha>0$.
\end{conj}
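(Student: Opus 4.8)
The plan is to reduce all three statements to standard but currently open conjectures on smooth values of shifted primes, and then to note that in each case the predicted density of suitable primes is already far larger than what is asked.

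First I would translate the divisibility conditions into smoothness conditions. With $y := \frac{\log x}{\log\log x}$ we have $M(x) = \lcm(1,2,\dots,\floor{y})$, so $p-1 \mid M(x)$ holds exactly when $\ell^{v_{\ell}(p-1)} \le y$ for every prime $\ell$; in particular $p-1$ is then $y$-smooth. Conversely a $y$-smooth prime shift $p-1 < (\log x)^{\alpha}$ fails the full divisibility only if $\ell^{j}\mid p-1$ for some prime $\ell \le y$ and the least $j$ with $\ell^{j}>y$, and for fixed $\ell$ this occurs for a density $\ll 1/y$ of primes, so summing over $\ell\le y$ excludes only $\ll\pi(y)/y = o(1)$ of all primes below $(\log x)^{\alpha}$. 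Hence $\abs{P_{\alpha}^{(+)}(x)}$ equals, up to an error $o(\pi(N))$ with $N := (\log x)^{\alpha}$, the number of primes $p<N$ whose shift $p-1$ is $y$-smooth; likewise $\abs{P_{\alpha}^{(-)}(x)}$ counts, up to $o(\pi(N))$, the primes $p<N$ for which $p-1$ and $p+1$ are simultaneously $y$-smooth. Writing $u := \log N/\log y$, so that $y = N^{1/u}$ and $u \to \alpha$ as $x\to\infty$, these are exactly counts of friable shifts.

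Next I would input the expected asymptotics. The folklore conjecture for friable shifted primes is that for each fixed $u>0$,
\[ \#\{\,p\le N : P^{+}(p-1)\le N^{1/u}\,\} \sim \rho(u)\,\pi(N), \]
with $\rho$ the Dickman function and $P^{+}$ the largest prime factor; the natural refinement predicts that the number of $p\le N$ with $p-1$ and $p+1$ both $N^{1/u}$-smooth is $\asymp_{u}\pi(N)$ (heuristically $\sim\rho(u)^{2}\pi(N)$). Granting these, continuity and positivity of $\rho$ give a fixed constant $\rho(\alpha)>0$ with $\rho(u)=\rho(\alpha)+o(1)$, while $\pi(N)\sim N/\log N = (\log x)^{\alpha}/(\alpha\log\log x)$ and $1/(\alpha\log\log x) = (\log x)^{-o(1)}$. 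Combining these facts yields $\abs{P_{\alpha}^{(+)}(x)} \gg \pi(N) = (\log x)^{\alpha-o(1)}$ and $\abs{P_{\alpha}^{(-)}(x)} \gg \pi(N) = (\log x)^{\alpha-o(1)}$, which establishes Propositions \ref{prop:lb1} and \ref{prop:lb2} for every $\alpha>0$. Proposition \ref{prop:lb2X} then requires no new input: its proof above invokes only the hypothesis $\abs{P_{\alpha}^{(-)}(x)} > \log(x)^{\alpha-o(1)}$, now available for all $\alpha$.

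The crux, and the reason the statement is only conjectural, is this friable-shifted-primes input. The asymptotic above is known only for $u$ below an explicit threshold --- roughly $u\le 23/8$ via the method behind Proposition \ref{prop:lb1} (Erd\H{o}s--Balog \cite{Balog01}) --- and for arbitrary fixed $u$ even the weak lower bound $\gg_{u}\pi(N)$ that we actually use appears to demand control of primes in arithmetic progressions to moduli almost as large as $N$, i.e.\ an Elliott--Halberstam-type hypothesis; this is well beyond current technology. Moreover, forcing $p-1$ and $p+1$ to be friable simultaneously is strictly harder than the one-shift problem, which is precisely why the unconditional range in Proposition \ref{prop:lb2} (\cite{DMT01}, $\alpha\le 4/3$) is so much smaller than in Proposition \ref{prop:lb1}. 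Barring such a breakthrough, one cannot expect to push the explicit $\alpha$ to infinity.
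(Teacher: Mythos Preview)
The statement you were asked to treat is a \emph{conjecture}: the paper offers no proof, only the one-line remark that ``it is generally expected (see for example \cite{ErdosPomerance01}) that the values $\alpha$ under consideration can be taken arbitrarily large.'' So there is no argument in the paper to compare your proposal against.

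Your write-up is not a proof either, and you are explicit about this: you reduce the three propositions to the standard conjectures on friable values of $p-1$ (and of $p^{2}-1$), and then point out that those conjectures are open beyond the thresholds $23/8$ and $4/3$ already quoted. That reduction is correct. Your translation of the condition $p-1\mid M(x)$ into ``$p-1$ is $y$-powersmooth'' with $y=\log x/\log\log x$, together with the observation that powersmoothness differs from ordinary $y$-smoothness on a set of density $O(\pi(y)/y)=o(1)$, is the right way to see why $\abs{P_{\alpha}^{(+)}(x)}$ matches the friable-shift count up to negligible error; the analogous remark for $P_{\alpha}^{(-)}(x)$ is also fine, and you correctly note that Proposition~\ref{prop:lb2X} is purely formal once Proposition~\ref{prop:lb2} holds. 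In short, your heuristic is sound and is exactly the picture the paper is alluding to, but it remains a heuristic: the missing ingredient is an unconditional lower bound of the shape $\gg_{u}\pi(N)$ for the number of primes $p\le N$ with $P^{+}(p-1)\le N^{1/u}$ (respectively $P^{+}(p^{2}-1)\le N^{1/u}$) valid for every fixed $u$, and that is precisely what is not known.
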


The following lemma will be useful in the next section.

\begin{lemma}\label{lem:cong-cases}
Fix $n$ and $p \mid n$. If $n=-1 \pmod{q}$ and $p=1 \pmod{q}$ for $q\ge 3$ then
\[ \gcd(n^2-1,p-1) - \gcd(n-1,p-1) > 0 \enspace . \]

If $n=-1 \pmod{q}$ and $p= -1 \pmod{q}$ for $q\ge 3$  then
\[ \gcd(n^2-1,p^2-1,n-p) - \gcd(n-1,p-1) > 0 \enspace . \]

If $n=p=1 \pmod{2}$ then
\[ \gcd(n-1,p-1)^2-\gcd(n-1,p-1) > 0 \enspace . \]
\end{lemma}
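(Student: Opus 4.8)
## Proof proposal for Lemma \ref{lem:cong-cases}

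The plan is to prove each of the three inequalities by exhibiting, in each case, a specific prime power $q^k$ that divides one of the two gcds appearing with a positive sign but does \emph{not} divide $\gcd(n-1,p-1)$; this forces the difference of the two gcds to be strictly positive, since in each case one gcd divides the other. First I would record the trivial but essential divisibility facts: $n-1 \mid n^2-1$, $p-1 \mid p^2-1$, and $\gcd(n-1,p-1)$ divides each of $\gcd(n^2-1,p-1)$, $\gcd(n^2-1,p^2-1,n-p)$, and $\gcd(n-1,p-1)^2$. Hence in every case the quantity in question is a difference $A - B$ with $B \mid A$ and $B>0$, so it suffices to show $A \neq B$, i.e. that $A$ is a strict multiple of $B$.

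For the first inequality, suppose $n \equiv -1 \pmod q$ and $p \equiv 1 \pmod q$ with $q \ge 3$. Then $q \mid n+1$ and $q \mid p-1$, so $q \mid \gcd(n^2-1, p-1)$. On the other hand $n - 1 \equiv -2 \pmod q$, and since $q \ge 3$ is odd, $q \nmid n-1$, hence $q \nmid \gcd(n-1,p-1)$. Combined with $\gcd(n-1,p-1) \mid \gcd(n^2-1,p-1)$ this gives $\gcd(n^2-1,p-1) > \gcd(n-1,p-1)$, as desired. For the second inequality, with $n \equiv -1 \pmod q$ and $p \equiv -1 \pmod q$, $q \ge 3$: here $q \mid n+1$, $q \mid p+1$, and $q \mid n - p$ (since $n - p \equiv 0 \pmod q$), so $q \mid \gcd(n^2-1, p^2-1, n-p)$; again $n-1 \equiv -2 \not\equiv 0 \pmod q$ so $q \nmid \gcd(n-1,p-1)$, and the same divisibility-of-gcds argument finishes it. For the third inequality, with $n$ and $p$ both odd, set $D = \gcd(n-1,p-1)$; since $n-1$ and $p-1$ are both even, $D$ is even and in particular $D \ge 2$, so $D^2 \ge 2D > D$, giving $D^2 - D > 0$.

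The only point requiring a moment's care — and the nearest thing to an obstacle — is making sure the "$q$ odd" hypothesis is genuinely used in the first two cases: it is exactly what rules out $q \mid n-1$ when $n \equiv -1 \pmod q$, since for $q=2$ one has $-1 \equiv 1$ and the argument collapses. This is why the lemma separates out the $q \ge 3$ cases from the $q=2$ (third) case, and why the third case is instead handled via the elementary observation that a gcd of two even numbers is at least $2$. Everything else is a routine chain of divisibilities, and no deeper input (no Linnik bound, no smoothness result) is needed here; this lemma is purely a bookkeeping step feeding into the lower-bound constructions of the next section.
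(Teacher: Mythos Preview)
Your proof is correct and follows essentially the same route as the paper's: exhibit that $q$ divides the larger gcd but not $\gcd(n-1,p-1)$ in the first two cases, and use $\gcd(n-1,p-1)\ge 2$ in the third. One small wording slip: you write ``since $q\ge 3$ is odd,'' but $q$ need not be odd---the correct (and sufficient) reason $q\nmid n-1$ is simply that $q\ge 3$ forces $q\nmid 2$, hence $-2\not\equiv 0\pmod q$; your later paragraph shows you understand this, so the argument stands.
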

\begin{proof}
For  $n=-1 \pmod{q}$ and $p=1 \pmod{q}$ we have $q \mid  \gcd(n+1,p-1)$
while  $q \nmid \gcd(n-1,p-1)$.  

If $n=-1 \pmod{q}$ and $p= -1 \pmod{q}$ then $q \mid \gcd(n^2-1,p^2-1,n-p)$ and $q \nmid \gcd(n-1,p-1)$.

Finally, for $n=p=1 \pmod{2}$ it follows that $\gcd(n-1,p-1)> 1$, and so 
$$\gcd(n-1,p-1)(\gcd(n-1,p-1) - 1)$$ is nonzero.
\end{proof}


\section{Lower bounds on the average number of degree-$2$ Frobenius pseudoprimes}

In this section we will prove the lower bound portion of the two theorems in the introduction. Specifically we shall prove the following results.

\begin{theorem}\label{thm:one}
For any value of $\alpha > 1$ satisfying Proposition \ref{prop:lb1} we have the asymptotic inequality
\[ \sum_{n<x} L_2^+(n) \geq x^{3-\alpha^{-1} - o(1)} \]
as $x\rightarrow \infty$.
\end{theorem}

\begin{theorem}\label{thm:two}
For any value of $\alpha > 1$ satisfying Proposition \ref{prop:lb2} we have the asymptotic inequality
\[ \sum_{n<x} L_2^-(n) \geq x^{3-\alpha^{-1} - o(1)} \]
as $x\rightarrow \infty$.
\end{theorem}
The proofs of the above two theorems are at the end of this section. We shall first introduce some notation and prove several necessary propositions.

\begin{nota}
For fixed $0 < \epsilon < \alpha-1$ and for all $x > 0$ let
\begin{itemize}
\item $k^{(+)}_\alpha(x) =\floor{ \frac{\log{x} - L\log M}{\alpha\log\log{x}}}$ \enspace ,

\item $k^{(-)}_\alpha(x) =\floor{ \frac{\log{x} - 2L\log M}{\alpha\log\log{x}}}$ \enspace ,

\item $S_{\alpha,\epsilon}^{(+)}(x)$ be the set of integers $s$ which are the product of $k^{(+)}_\alpha(x)$ distinct elements from 
 \[ P_\alpha^{(+)}(x) \setminus P_{\alpha-\epsilon}^{(+)}(x) \enspace , \]
\item ${S_{\alpha,\epsilon}^{(-)}}\left(M_1(x),M_2(x),x\right)$ be the set of integers $s$ which are the product of the largest 
odd number not larger than $k^{(-)}_\alpha(x)$ many distinct elements from 
 \[ P_{\alpha}\left(M_1(x),M_2(x),x\right) \setminus P_{\alpha-\epsilon}\left(M_1(x),M_2(x),x\right) \enspace. \]
\end{itemize}
\end{nota}

The following two claims are immediate consequences of the construction.
\begin{claim}
The elements $s$ of $S_{\alpha,\epsilon}^{(+)}(x)$ all satisfy
   \[ \left((\log{x})^{-k^{(+)}_\alpha(x)\epsilon}\right)\frac{x^{1-o(1)}}{M^L} \leq s < \frac{x}{M^L} \]
   as $x\rightarrow \infty$.
\end{claim}

\begin{claim}
The elements $s$ of $S_{\alpha,\epsilon}^{(-)}(x)$ all satisfy
   \[ \left((\log{x})^{-k^{(-)}_\alpha(x)\epsilon}\right)\frac{x^{1-o(1)}}{M^L} \leq s < \frac{x}{M^{2L}} \]
      as $x\rightarrow \infty$.
\end{claim}

The next two propositions follow  from the lower bound on the size of $P_\alpha^{(\pm)}$ and the definition of $k_\alpha^{(\pm)}$.

\begin{proposition} \label{prop:Splussize}
If $\alpha$ satisfies the conditions of Proposition \ref{prop:lb1} then
      \[ \abs{S_{\alpha,\epsilon}^{(+)}(x) }\geq x^{1-\alpha^{-1} + o(1)} \]
         as $x\rightarrow \infty$.
\end{proposition}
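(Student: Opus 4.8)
The plan is to estimate $|S_{\alpha,\epsilon}^{(+)}(x)|$ directly as a binomial coefficient, using the lower bound on $|P_\alpha^{(+)}(x)|$ from Proposition \ref{prop:lb1} and the fact that removing the smaller primes in $P_{\alpha-\epsilon}^{(+)}(x)$ costs essentially nothing. First I would note that, by construction, $S_{\alpha,\epsilon}^{(+)}(x)$ is in bijection with the collection of $k_\alpha^{(+)}(x)$-element subsets of the set $P_\alpha^{(+)}(x)\setminus P_{\alpha-\epsilon}^{(+)}(x)$, since products of distinct primes are distinct. Hence
\[
\abs{S_{\alpha,\epsilon}^{(+)}(x)} = \binom{\abs{P_\alpha^{(+)}(x)\setminus P_{\alpha-\epsilon}^{(+)}(x)}}{k_\alpha^{(+)}(x)} \enspace .
\]
By Proposition \ref{prop:lb1} applied at both $\alpha$ and $\alpha-\epsilon$, we have $\abs{P_\alpha^{(+)}(x)} > \log(x)^{\alpha - o(1)}$ while $\abs{P_{\alpha-\epsilon}^{(+)}(x)} \le \log(x)^{\alpha-\epsilon} = o(\log(x)^{\alpha-o(1)})$, so $\abs{P_\alpha^{(+)}(x)\setminus P_{\alpha-\epsilon}^{(+)}(x)} > \log(x)^{\alpha - o(1)}$ as well.

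Next I would bound the binomial coefficient from below using $\binom{N}{k} \ge (N/k)^k$. With $N > \log(x)^{\alpha-o(1)}$ and $k = k_\alpha^{(+)}(x) = \floor{(\log x - L\log M)/(\alpha\log\log x)}$, and recalling from Proposition \ref{prop:Msize} that $M = M(x) = x^{o(1)}$ so that $L\log M = o(\log x)$, we get $k = (1-o(1))\log x/(\alpha\log\log x)$. Therefore
\[
\binom{N}{k} \ge \left(\frac{N}{k}\right)^k \ge \left(\frac{\log(x)^{\alpha-o(1)}}{\log x}\right)^{k} = \log(x)^{(\alpha-1-o(1))k} \enspace .
\]
Taking logarithms, $\log \abs{S_{\alpha,\epsilon}^{(+)}(x)} \ge (\alpha-1-o(1))\, k \log\log x = (\alpha-1-o(1))(1-o(1))\frac{\log x}{\alpha\log\log x}\log\log x = \frac{\alpha-1}{\alpha}\log x \cdot (1-o(1))$, which rearranges to $\abs{S_{\alpha,\epsilon}^{(+)}(x)} > x^{1-\alpha^{-1}+o(1)}$ — here absorbing the $(1-o(1))$ factor and the constraint $\epsilon < \alpha-1$ into the $o(1)$ in the exponent.

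The routine but slightly delicate bookkeeping is making sure every approximation genuinely collapses into the single $o(1)$ in the exponent $1-\alpha^{-1}+o(1)$: the $o(1)$ losses in $|P_\alpha^{(+)}(x)|$, the replacement of $N/k$ by $\log(x)^{\alpha-1-o(1)}$, the floor in the definition of $k_\alpha^{(+)}(x)$, and the $L\log M$ correction term all contribute multiplicatively in $\log\log x$ but only additively at the level of $\log\log x / \log x$ relative error, so after exponentiating they are swallowed. The main obstacle, such as it is, is purely organizational: tracking these error terms carefully enough to be confident the final exponent is exactly $1-\alpha^{-1}+o(1)$ and not something weaker; there is no real mathematical difficulty once the binomial-coefficient identity and the input from Proposition \ref{prop:lb1} are in hand.
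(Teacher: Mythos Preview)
Your proof is correct and follows essentially the same approach as the paper's: identify $|S_{\alpha,\epsilon}^{(+)}(x)|$ as a binomial coefficient, bound the pool of primes below by $(\log x)^{\alpha-o(1)}$, apply $\binom{N}{k}\ge(N/k)^k$, and simplify using $k_\alpha^{(+)}(x)=(\alpha^{-1}+o(1))\log x/\log\log x$. One minor remark: the upper bound $|P_{\alpha-\epsilon}^{(+)}(x)|\le(\log x)^{\alpha-\epsilon}$ is trivial (it is a set of primes below $(\log x)^{\alpha-\epsilon}$), not a consequence of Proposition~\ref{prop:lb1}, which only supplies lower bounds.
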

\begin{proof}
A standard bound on a binomial coefficient is given by ${n \choose k} \geq (n/k)^k$.  We are choosing $k_{\alpha}^{(+)}$
many primes from a set of size at least $(\log{x})^{\alpha - o(1)} - (\log{x})^{\alpha - \epsilon} = (\log{x})^{\alpha - o(1)}$.
Noting that $M = x^{o(1)}$ by Proposition \ref{prop:Msize}, 
the resulting lower bound on $\abs{S_{\alpha,\epsilon}^{(+)}(x) }$ is
$$
\left( \frac{ (\log{x})^{\alpha - o(1)} }{ (\log{x})^{1+o(1)}} \right)^{\frac{ \log{x} - L\log M}{\alpha \log\log{x}} - 1}
\geq ( (\log{x})^{\alpha - 1 + o(1)})^{(\alpha^{-1} + o(1))\frac{\log{x}}{\log\log{x}}}
= x^{1-\alpha^{-1} + o(1)} \enspace .
$$
\end{proof}

\begin{proposition} \label{prop:Sminussize}
If $\alpha$, $M_1(x)$, and $M_2(x)$ satisfy the conditions of Proposition \ref{prop:lb2X} then
      \[ \abs{S_{\alpha,\epsilon}^{(-)}\left(M_1(x),M_2(x), x \right)} \geq x^{1-\alpha^{-1} + o(1)} \]
         as $x\rightarrow \infty$.
\end{proposition}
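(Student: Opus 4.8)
The plan is to mirror the proof of Proposition~\ref{prop:Splussize} almost verbatim, substituting the $(-)$-data for the $(+)$-data and paying attention to the two small discrepancies in the definitions: the numerator in $k_\alpha^{(-)}(x)$ is $\log(x) - 2L\log(M)$ rather than $\log(x) - L\log(M)$, and the elements of $S_{\alpha,\epsilon}^{(-)}(M_1,M_2,x)$ are products of \emph{the largest odd number not larger than} $k_\alpha^{(-)}(x)$ distinct primes, rather than exactly $k_\alpha^{(+)}(x)$ of them. Neither of these affects the final exponent: $2L\log(M) = \log(M^{2L}) = x^{o(1)}$ by Proposition~\ref{prop:Msize}, so $k_\alpha^{(-)}(x) = (\alpha^{-1}+o(1))\frac{\log x}{\log\log x}$ just as $k_\alpha^{(+)}(x)$ does; and replacing $k_\alpha^{(-)}(x)$ by the nearest odd integer $\leq k_\alpha^{(-)}(x)$ changes it by at most $1$, which is absorbed into the $o(1)$ in the exponent.

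First I would invoke Proposition~\ref{prop:lb2X} (which applies by hypothesis, since $\alpha$, $M_1(x)$, $M_2(x)$ satisfy its conditions) to get $\abs{P_\alpha(M_1(x),M_2(x),x)} > (\log x)^{\alpha - o(1)}$. Next, since $P_{\alpha-\epsilon}(M_1,M_2,x) \subseteq \{p < (\log x)^{\alpha-\epsilon}\}$ has at most $(\log x)^{\alpha-\epsilon}$ elements, the set $P_\alpha(M_1,M_2,x) \setminus P_{\alpha-\epsilon}(M_1,M_2,x)$ from which we draw still has size at least $(\log x)^{\alpha-o(1)} - (\log x)^{\alpha-\epsilon} = (\log x)^{\alpha - o(1)}$. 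Then I would apply the binomial bound $\binom{N}{k} \geq (N/k)^k$ with $N = (\log x)^{\alpha-o(1)}$ and $k$ equal to the largest odd integer not exceeding $k_\alpha^{(-)}(x)$, so that $k = (\alpha^{-1}+o(1))\frac{\log x}{\log\log x}$ and $k < (\log x)^{1+o(1)}$. This yields
\[
\abs{S_{\alpha,\epsilon}^{(-)}(M_1(x),M_2(x),x)} \geq \left( \frac{(\log x)^{\alpha-o(1)}}{(\log x)^{1+o(1)}} \right)^{(\alpha^{-1}+o(1))\frac{\log x}{\log\log x}} = \left( (\log x)^{\alpha - 1 + o(1)} \right)^{(\alpha^{-1}+o(1))\frac{\log x}{\log\log x}} = x^{1 - \alpha^{-1} + o(1)} \enspace ,
\]
where the last equality uses $\exp\!\big((\alpha-1)\alpha^{-1}\log x\big) = x^{1-\alpha^{-1}}$ together with the fact that the various $o(1)$ error terms contribute only $x^{o(1)}$.

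I do not anticipate a genuine obstacle here, since this is essentially a transcription of the $(+)$-case argument; the one point requiring a moment's care is bookkeeping the $o(1)$ terms, specifically checking that the odd-integer truncation and the $2L\log(M)$ shift in $k_\alpha^{(-)}(x)$ both disappear into $o(1)$ in the exponent of $x$ — this is exactly where Proposition~\ref{prop:Msize} ($M(x) = x^{o(1)}$) is used. One should also note, as in the $(+)$-case, that the lower bound in the corresponding Claim guarantees these $s$ lie in the expected range $x^{1-o(1)}/M^L < s < x/M^{2L}$, which is what makes them usable in the subsequent construction; but that range statement is the content of the Claim rather than of this Proposition, so the proof of the Proposition itself need only establish the displayed count.
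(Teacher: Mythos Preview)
Your proposal is correct and takes essentially the same approach as the paper, which simply states that the proof is identical to that of Proposition~\ref{prop:Splussize}. You have just made explicit the minor bookkeeping (the $2L\log M$ shift and the odd-integer truncation of $k_\alpha^{(-)}(x)$) that the paper leaves implicit.
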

\begin{proof}
The proof is identical to that of Proposition \ref{prop:Splussize}.
\end{proof}

The next two propositions construct a composite $n$ with many degree-$2$ Frobenius liars.
The strategy in the plus one case is to start with a composite $s$ that is the product of many primes $p$ such that 
$p-1$ is smooth, then find a prime $q$ such that $n = sq$ is congruent 
to $1$ modulo $M$.  While the liar count primarily comes from the primes $p$ dividing $s$, we need 
to ensure at least one modulo $q$ liar, else the entire modulo $n$ liar count becomes $0$.

\begin{lemma}\label{lem:consQ1}
As before, let $L$ be an upper bound for Linnik's constant.
Given any element $s$ of $S_{\alpha,\epsilon}^{(+)}(x)$ there exists a prime $q < M^L$ such that
\begin{itemize}
\item $sq=1 \pmod{M}$,
\item $ \gcd(q,s) = 1$, and
\item $ \frac{1}{2}\left( \gcd(q-1,sq-1)^2- \gcd(q-1,sq-1)\right) > 0$.
\end{itemize}
Moreover, the number of liars of $n=sq$ with $\jac{\Delta}{n} = +1$ is at least $x^{2 - \epsilon \frac{2}{\alpha} - o(1)}$ as $x\rightarrow \infty$.
\end{lemma}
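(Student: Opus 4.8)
The plan is to produce the prime $q$ via Linnik's theorem applied to a carefully chosen residue class modulo a modulus that is a bounded power of $M$, and then to read off the liar count from Corollary \ref{plusformula} (or directly from Theorem \ref{thm:L2+}). First I would fix $s \in S_{\alpha,\epsilon}^{(+)}(x)$; since every prime dividing $s$ lies in $P_{\alpha}^{(+)}(x)$, each such $p$ satisfies $(p-1) \mid M$, and $s$ is coprime to $M$ because its prime factors exceed $(\log x)^{\alpha - \epsilon}$ while $M$ is a product of primes below $\log(x)/\log\log(x)$ (for $x$ large). The residue class I want is $q \equiv s^{-1} \pmod{M}$, which is well-defined and coprime to $M$; this immediately forces $sq \equiv 1 \pmod M$. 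To also guarantee the third bullet I would strengthen the congruence: working modulo $4M$ (or $M$ times a fixed small prime power), I can additionally demand that $q$ lie in a class ensuring $\gcd(q-1, sq-1) > 1$ — for instance, picking an odd prime $\ell$ dividing $M$ but with $\ell \nmid (p-1)$ not needed; more simply, since $M$ is even, requiring $q \equiv s^{-1} \pmod{M}$ already gives $sq - 1 \equiv 0 \pmod M$, hence $2 \mid \gcd(q-1, sq-1)$ once $q$ is odd, and $q$ is odd automatically as $(q,M)=1$ with $M$ even. Thus $\gcd(q-1, sq-1) \geq 2$, which makes the quantity $\tfrac12(\gcd(q-1,sq-1)^2 - \gcd(q-1,sq-1))$ strictly positive, giving the third bullet and (via Lemma \ref{lem:cong-cases}, third case) a nonzero local liar count at $q$. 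Coprimality $\gcd(q,s)=1$ is free once $q < M^L < s$ and $q$ is prime, or can be arranged by a further congruence condition excluding the primes dividing $s$; either way Linnik's bound gives such a $q < M^L$ (the modulus is a bounded power of $M$, so the bound is still $M^{O(L)} = M^{o(\log x)}$, absorbed into the $x^{o(1)}$).

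Next I would estimate the liar count of $n = sq$. Since $q-1 \mid M$ need not hold, but $sq \equiv 1 \pmod M$ does, for each prime $p \mid s$ we have $p - 1 \mid M \mid n-1$, so $\gcd(n-1, p-1) = p-1$. By Lemma \ref{lem:L2++}, $L_2^{++}(n,p) = \tfrac12((p-1)^2 - (p-1))$. Feeding these into the squarefree formula of Corollary \ref{plusformula} — noting $n = sq$ is squarefree since $s$ is a product of distinct primes, $q$ is prime, and $\gcd(q,s)=1$ — the dominant term is $\tfrac14 \prod_{p \mid n}(\gcd(n-1,p^2-1) + \gcd(n-1,p-1)^2 - 2\gcd(n-1,p-1))$, and restricting to the primes $p \mid s$ this is at least $\tfrac14 \prod_{p \mid s}((p-1)^2 - (p-1)) = \prod_{p \mid s}(p-1)(p-2)/4 \cdot (\text{contribution of }q)$. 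The contribution at $q$ is positive by the third bullet, so it only helps. The product over $p \mid s$ of $(p-1)(p-2)$ is $s^{2 - o(1)}$ up to the usual loss: $\prod_{p\mid s}(p-1)(p-2) = s^2 \prod_{p\mid s}(1-1/p)(1-2/p) \geq s^2 (\log x)^{-o(k_\alpha^{(+)})} $, and one must also check the second (anti-symmetrized) term in Corollary \ref{plusformula} does not cancel the first — but that term is $\tfrac14\prod_{p\mid n}(\gcd(n-1,p-1)^2 - \gcd(n-1,p^2-1))$, which is nonnegative term-by-term here and in any case strictly smaller in absolute value, so no cancellation. Using the lower bound $s > (\log x)^{-k_\alpha^{(+)}(x)\epsilon} x^{1-o(1)}/M^L$ from the Claim, and $k_\alpha^{(+)}(x) = O(\log x / \log\log x)$, we get $s^2 > x^{2 - 2\epsilon/\alpha - o(1)}$, since the $M^L$ and the $(\log x)^{-2k_\alpha^{(+)}\epsilon}$ factors contribute $x^{-2\epsilon/\alpha - o(1)}$ and $x^{o(1)}$ respectively (more precisely $k_\alpha^{(+)}\epsilon \log\log x \leq \epsilon \log x / \alpha$, so $(\log x)^{-2k_\alpha^{(+)}\epsilon} \geq x^{-2\epsilon/\alpha}$). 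This yields the claimed bound $x^{2 - 2\epsilon/\alpha - o(1)}$.

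The main obstacle I anticipate is bookkeeping the various $o(1)$ and $(\log x)^{o(\cdot)}$ losses so that the final exponent is exactly $2 - 2\epsilon/\alpha - o(1)$ rather than something weaker: in particular one must confirm that $\prod_{p \mid s}(1 - 2/p)$, over the $k_\alpha^{(+)}(x)$ primes dividing $s$ (all of size between $(\log x)^{\alpha-\epsilon}$ and $(\log x)^\alpha$), only costs a factor $x^{-o(1)}$ — which follows because each factor is $1 - O((\log x)^{-(\alpha-\epsilon)})$ and there are only $O(\log x/\log\log x)$ of them, so the product is $\exp(-O((\log x)^{1-(\alpha-\epsilon)}/\log\log x)) = x^{-o(1)}$ provided $\alpha - \epsilon > 1$, which holds by the standing assumption $0 < \epsilon < \alpha - 1$. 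A secondary subtlety is ensuring that Linnik's constant applies with the enlarged modulus $4M$ (or whatever fixed multiple of $M$ is used to pin down the parity condition and the coprimality-to-$s$ condition): since that modulus is still $M \cdot O(1)$ and $M = x^{o(1)}$ by Proposition \ref{prop:Msize}, the resulting $q < (4M)^L = M^{L + o(1)} = x^{o(1)}$, so all is well and in particular $q < x$, keeping $n = sq < x$ as needed for the outer sum.
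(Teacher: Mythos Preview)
Your approach is essentially the paper's, but there is one genuine gap and one unnecessary detour.

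\textbf{The gap: coprimality of $q$ and $s$.} Your claim that ``$\gcd(q,s)=1$ is free once $q<M^L<s$ and $q$ is prime'' is false: nothing prevents $q$ from being one of the primes dividing $s$. Your backup plan, imposing congruence conditions modulo the primes dividing $s$, also fails: those primes are each larger than $(\log x)^{\alpha-\epsilon}$ and do not divide $M$, so the enlarged modulus would be $M\cdot s$, not a bounded power of $M$, and Linnik would only give $q<(Ms)^L$, far too large. The paper's fix is simpler: choose $q$ to be the smallest prime in the progression $s^{-1}\pmod{M}$ that is greater than $M$ (still $<M^L$ by Linnik). Since every prime factor of $s$ is below $(\log x)^\alpha$, and $M\sim\exp(\log x/\log\log x)\gg(\log x)^\alpha$ for large $x$, we get $q>M>$ every prime factor of $s$, hence $\gcd(q,s)=1$.

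\textbf{The detour: the liar-count lower bound.} You route through Corollary~\ref{plusformula} and worry about the anti-symmetrized term; your assertion that its factors are ``nonnegative term-by-term here'' is not justified (indeed $\gcd(n-1,p-1)^2-\gcd(n-1,p^2-1)$ can be negative even when $p-1\mid n-1$). The paper avoids this entirely: from Theorem~\ref{thm:L2+}, expanding $\tfrac12\prod(L_2^{++}+L_2^{+-})+\tfrac12\prod(L_2^{++}-L_2^{+-})$ shows that $L_2^+(n)$ is a sum over even subsets $T$ of $\prod_{i\in T}L_2^{+-}\prod_{i\notin T}L_2^{++}\geq 0$, so taking $T=\emptyset$ gives $L_2^+(n)\geq\prod_{p\mid n}L_2^{++}(n,p)$ directly. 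The rest of your estimate, $\prod_{p\mid s}(p-1)(p-2)=s^{2-o(1)}$ and $s\geq x^{1-\epsilon/\alpha-o(1)}$, matches the paper.

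Your handling of the third bullet (parity via $2\mid M$ and $q$ odd) and of the $o(1)$ bookkeeping is fine and agrees with the paper.
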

\begin{proof}
By construction, every $s \in S_{\alpha, \epsilon}^{(+)}(x)$ satisfies $\gcd(s,M) = 1$ since 
primes dividing $s$ are larger than $\log{x}$.
Then by the definition of $L$, we can choose $M < q < M^{L}$ to be the smallest prime such that 
$sq = 1 \pmod{M}$.  Since $q > M$ and the factors of $s$ are all smaller than $M$, we have 
$\gcd(q,s) = 1$.  With $q,n$ both odd, the third condition follows from Lemma \ref{lem:cong-cases}.

For a lower bound on $L_2^+(n)$ where $n = sq$ we count only the liars from primes $p \mid s$
with $\jac{\Delta}{p} = +1$.  This gives
$$
\prod_{p \mid s} L_2^{++}(n,p) = 
\prod_{p \mid s} \frac{1}{2} (\gcd(n-1,p-1)^2 - \gcd(n-1,p-1))
$$
by Lemma \ref{lem:L2++}.
By construction, for $p \mid s$ we have $p-1 \mid M$ and $M \mid n-1$, so the product becomes 
\begin{align*}
2^{-k^{(+)}_\alpha(x)}\prod_{p \mid s} (p-1)(p-2) 
&\geq 2^{-k^{(+)}_\alpha(x)} \cdot  s^{2-o(1)} \\
&\geq x^{-o(1)} \left((\log{x})^{-k_\alpha^{(+)}(x) \epsilon (2-o(1))} \right) \frac{x^{2-o(1)}}{M^{L (2-o(1))}} \\
& \geq x^{-o(1)} x^{-\epsilon \cdot \frac{2}{\alpha}(1+o(1))} \frac{x^{2-o(1)}}{x^{o(1)}}
= x^{2 - \epsilon \frac{2}{\alpha} - o(1)}
\end{align*}
where the upper bound on $M$ comes from Proposition \ref{prop:Msize}.
\end{proof}

In the minus one case we have two different divisibility conditions to satisfy, and as a result require two primes 
$q_1$ and $q_2$ to complete the composite number $n$.

\begin{lemma}\label{lem:consQ2}
Let $L$ be an upper bound for Linnik's constant.
Given any element $s$ of $S_{\alpha,\epsilon}^{(-)}(x)$ there exists a number $q < M^{2L}$ such that
\begin{itemize}
\item $sq=1 \pmod{M_1}$,
\item $sq=-1 \pmod{M_2}$,
\item $ \gcd(q,s) = 1$, and
\item $\prod_{p \mid q} \frac{1}{2}\left( \gcd((sq)^2-1,p-1)- \gcd(sq-1,p-1)\right) > 0.$
\end{itemize}
Moreover, the number of liars of $n=sq$ with $\jac{\Delta}{n} = -1$ is at least
\[ 2^{-k^{(-)}_\alpha(x)}\prod_{p \mid s} \left(p^2-1\right)   = x^{2-\epsilon \frac{2}{\alpha}-o(1)}  \]
   as $x\rightarrow \infty$.
\end{lemma}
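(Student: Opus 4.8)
The plan is to mimic the proof of Lemma~\ref{lem:consQ1} while accounting for the two simultaneous congruence conditions modulo $M_1$ and $M_2$. First I would package the two conditions $sq \equiv 1 \pmod{M_1}$ and $sq \equiv -1 \pmod{M_2}$ into a single congruence: since $\gcd(M_1,M_2) = 2$ and both $M_i$ are even, one checks that $1 \pmod{M_1}$ and $-1 \pmod{M_2}$ are compatible modulo $\gcd(M_1,M_2) = 2$ (both are odd), so by the Chinese remainder theorem there is a unique residue $c$ modulo $\lcm(M_1,M_2) = M$ with $c \equiv 1 \pmod{M_1}$ and $c \equiv -1 \pmod{M_2}$. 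Then I want $q \equiv c s^{-1} \pmod{M}$; this makes sense because every $s \in S_{\alpha,\epsilon}^{(-)}(x)$ is coprime to $M$ by construction, and moreover $\gcd(c, M) = 1$ since $c \equiv \pm 1$ modulo each prime power dividing $M$. By the definition of Linnik's constant $L$, there is a prime $q < M^L < M^{2L}$ with $q \equiv cs^{-1} \pmod{M}$; this prime automatically satisfies the first three bullet points, the coprimality $\gcd(q,s)=1$ following as before because $q > M$ exceeds all prime factors of $s$. (The bound $M^{2L}$ rather than $M^L$ gives room; one could also take $q$ a product of two primes if a single-prime argument is awkward, which is presumably why the statement writes "a number $q$" rather than "a prime $q$.")

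Next I would verify the fourth bullet, $\prod_{p \mid q} \frac12(\gcd((sq)^2-1, p-1) - \gcd(sq-1,p-1)) > 0$. Writing $n = sq$, we have $n \equiv 1 \pmod{M_1}$ and $n \equiv -1 \pmod{M_2}$. Pick any odd prime $\ell \mid M_2$ (such $\ell$ exists once $M_2$ has an odd prime factor, which holds for $x$ large); there is a prime $p \mid q$ with $p \equiv -1 \pmod{\ell}$ coming from the set $P_\alpha(M_1,M_2,x)$ structure — more directly, the elements of $S_{\alpha,\epsilon}^{(-)}$ are built from primes $p$ with $(p+1) \mid M_2$, and for $n \equiv -1 \pmod{M_2}$ we get $n \equiv -1 \pmod{\ell}$ and $p \equiv -1 \pmod \ell$. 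Wait — here $q$ is the newly constructed factor, not a factor of $s$, so instead I should argue directly about the prime(s) dividing $q$: since $q \equiv cs^{-1} \pmod M$ and $c \equiv -1 \pmod{M_2}$, for a suitable odd prime $\ell \mid M_2$ we can arrange (possibly by taking $q$ to be a product of two primes, choosing the second one's residue appropriately via Linnik again) that some $p \mid q$ satisfies $p \equiv 1 \pmod \ell$ while $n \equiv -1 \pmod \ell$, whence Lemma~\ref{lem:cong-cases} gives $\gcd(n^2-1, p-1) - \gcd(n-1,p-1) > 0$. This makes the relevant factor in the product strictly positive, and the remaining factors are nonnegative (each is $\frac12(\gcd - \gcd)$ with the first gcd a multiple of the second), so the whole product is positive.

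For the final liar-count estimate, I would restrict attention to the liars coming from primes $p \mid s$, all of which satisfy $\jac{\Delta}{p} = -1$ (by construction the primes building $s$ are inert: $(p+1) \mid M_2 \mid n^2 - 1$ but $p \nmid n - 1$, and one arranges $p - 1 \mid M_1 \mid n-1$... actually $p+1 \mid M_2$ and $n \equiv -1 \pmod{M_2}$ gives $p \equiv -1 \pmod{p+1}$-type divisibility of $n - p$). Using Lemma~\ref{Lem:L2--}, the contribution is $\prod_{p \mid s} L_2^{--}(n,p) = \prod_{p\mid s}\frac12(\gcd(p^2-1, n^2-1, n-p) - \gcd(n-1,p-1))$; because $(p+1)\mid M_2$ and $(p-1)\mid M_1$ with $M = M_1 M_2 \mid n^2-1$ (and $n \equiv -1 \bmod M_2$ forces $p \mid n-p$-type divisibility, so $p^2 - 1 \mid n - p$ up to the $2$-part bookkeeping handled in Section~\ref{subsec:vanish}), this gcd equals $p^2-1$, giving $2^{-k_\alpha^{(-)}(x)}\prod_{p\mid s}(p^2-1)$. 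Then $\prod_{p\mid s}(p^2-1) \geq s^{2-o(1)}$, and inserting the size bounds on $s$ from the Claim preceding Proposition~\ref{prop:Splussize} (i.e.\ $s > \log(x)^{-k_\alpha^{(-)}(x)\epsilon} x^{1-o(1)}/M^L$) together with $M = x^{o(1)}$ and $k_\alpha^{(-)}(x) \leq \frac{\log x}{\alpha \log\log x}$, exactly the computation at the end of Lemma~\ref{lem:consQ1} yields $x^{2 - \epsilon\frac{2}{\alpha} - o(1)}$. I expect the main obstacle to be the careful $2$-adic bookkeeping: ensuring that the even parts of $n-1$, $n^2-1$, $p-1$, $p+1$ line up so that $\gcd(p^2-1, n^2-1, n-p)$ genuinely equals $p^2-1$ rather than half of it, which is precisely the issue flagged in the vanishing discussion of Section~\ref{subsec:vanish} and is why the construction forces $n \equiv 1 \pmod{M_1}$, $n \equiv -1 \pmod{M_2}$ with the specific congruence $p \equiv 1 + 2^r$, $s \equiv 1 + 2^r$ conditions.
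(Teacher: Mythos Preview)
Your overall strategy matches the paper's, and the liar-count computation at the end is essentially correct. The genuine gap is in the fourth bullet. The product $\prod_{p \mid q} \tfrac12\bigl(\gcd((sq)^2-1, p-1) - \gcd(sq-1, p-1)\bigr)$ is strictly positive only if \emph{every} factor is positive; your sentence ``the remaining factors are nonnegative \ldots\ so the whole product is positive'' is a logical slip, since a nonnegative factor can be zero. Moreover, the single-prime route cannot be made to work in general: if $q$ is prime and you want some odd $\ell$ with $\ell \mid q-1$ and $\ell \mid sq+1$, then $q\equiv 1$ and $sq\equiv -1\pmod{\ell}$ force $s\equiv -1\pmod{\ell}$, a constraint on $s$ that you have no freedom to impose. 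You noticed something was off (``Wait\ldots'') and gestured at using two primes, but did not carry out the construction or see that \emph{each} prime factor of $q$ needs its own auxiliary odd prime.

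The paper resolves this by taking $q=q_1q_2$ and fixing two distinct odd primes $\ell_1,\ell_2\mid M_2$, writing $M_2=M_2'\ell_1^{r_1}\ell_2^{r_2}$. One then imposes (via Linnik, giving $q_i<M^L$) the congruences $q_1\equiv 1\pmod{\ell_1^{r_1}}$ and $q_2\equiv 1\pmod{\ell_2^{r_2}}$, while distributing the remaining conditions modulo $M_1$, $M_2'$, $\ell_1^{r_1}$, $\ell_2^{r_2}$ between $q_1$ and $q_2$ so that $sq_1q_2\equiv 1\pmod{M_1}$ and $sq_1q_2\equiv -1\pmod{M_2}$ both hold. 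Then for each $i$ one has $n\equiv -1\pmod{\ell_i}$ and $q_i\equiv 1\pmod{\ell_i}$, so Lemma~\ref{lem:cong-cases} makes \emph{both} factors positive. With that in hand, Corollary~\ref{minusformula} gives $L_2^-(n)\geq \prod_{p\mid q}L_2^{-+}(n,p)\cdot\prod_{p\mid s}L_2^{--}(n,p)$, the first product is at least $1$, and since $p-1\mid M_1\mid n-1$ and $p+1\mid M_2\mid n+1$ for each $p\mid s$ one gets $\gcd(p^2-1,n^2-1,n-p)=p^2-1$ up to a harmless power of $2$ absorbed by $x^{o(1)}$, exactly as in the paper.
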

\begin{proof}
We construct $q$ as the product of two primes $q_1$ and $q_2$.
Let $\ell_1,\ell_2$ be two distinct odd primes which divide $M_2$ and write $M_2 = M_2'\ell_1^{r_1}\ell_2^{r_2}$
where $\gcd(M_2', \ell_1 \ell_2)=1$.
Choose $q_1$ to be the smallest prime greater than $M$ satisfying the following four conditions:
$$
\begin{array}{ll}
sq_1 = 1 \pmod{M_1}  \hspace{1in} & sq_1 = -1 \pmod{M_2'} \\ q_1 = 1 \pmod{\ell_1^{r_1}}   \hspace{1in}& sq_1 = -1 \pmod{\ell_2^{r_2}} 
\end{array}
$$
and choose $q_2$ to be the smallest prime greater than $M$ satisfying the following four conditions:
$$
\begin{array}{ll}
 q_2 = 1\pmod{M_1} \hspace{1in} &q_2 = 1 \pmod{M_2'}  \\ sq_2 = -1 \pmod{\ell_1^{r_1}}  \hspace{1in} &q_2 = 1 \pmod{\ell_2^{r_2}} \enspace .
\end{array}
$$

Note that $q_1,q_2 > M$ implies they are greater than any factor of $s$, and thus relatively prime to $s$, 
and $\gcd(s,M) = 1$ since primes dividing $s$ are greater than $\log{x}$.
Then $q_1$, $q_2$ exist due to the definition of Linnik's constant, with $q_1, q_2 < (M_1 M_2' \ell_1^{r_1} \ell_2^{r_2})^L$
so that $q < M^{2L}$.  Note $s q_1 q_2 = 1 \pmod{M_1}$, which satisfies the first bulleted condition.  In addition, 
$sq_1 q_2 = -1 \pmod{M_2'}$, $sq_1 q_2 = -1 \pmod{\ell_1^{r_1}}$, and $sq_1 q_2 = -1 \pmod{\ell_2^{r_2}}$ so that 
$sq = -1 \pmod{M_2}$.  For the fourth bullet point, $sq_1 q_2 = -1 \pmod{\ell_1^{r_1}}$ and $q_1 = 1 \pmod{\ell_1^{r_1}}$
gives the result by Lemma \ref{lem:cong-cases}.

To bound $L_2^{-}(n)$ we select only $n$ where $\jac{\Delta}{p} = +1$ for all $p \mid q$
and $\jac{\Delta}{p} = -1$ for all $p \mid s$.  By Lemma \ref{Lem:L2--} we have 
\begin{align*}
\prod_{p \mid s} L_2^{--}(n,p) &= 
\prod_{p \mid s} \frac{1}{2}\left(\gcd(p^2-1,n^2-1,n-p)- \gcd(n-1,p-1)\right) \\
& = 2^{-k_{\alpha}^{(-)}(x)} \prod_{p \mid s} ( p^2-1) - (p-1)
\end{align*}
since by construction $p-1 \mid n-1$ and $p+1 \mid n+1$.  This product is $x^{2-\epsilon \frac{2}{\alpha} - o(1)}$
by the same argument as that in Lemma \ref{lem:consQ1}.
\end{proof}

\begin{proof}[Proof of Theorems \ref{thm:one} and \ref{thm:two}]
In each case the theorem is an immediate consequence of the lower bounds on the number of liars for each value of $n=sq$ constructed in Lemma \ref{lem:consQ1} or \ref{lem:consQ2}, together with the size of the set $S$ under consideration.

More specifically, for each element $s$ of $S_{\alpha,\epsilon}^{(\pm)}(x)$, by Lemma \ref{lem:consQ1} or \ref{lem:consQ2} 
we can associate a distinct number $n$ with 
 $L_2^\pm(n) \geq x^{2-\epsilon \frac{2}{\alpha}-o(1)}$.  For each of the plus, minus cases we have that
\[ \abs{ S_{\alpha,\epsilon}^{(\pm)}(x)} \geq x^{1-\alpha^{-1}-o(1)}  \]
for $\alpha$ satisfying as appropriate Proposition \ref{prop:lb1} or  \ref{prop:lb2}.
We conclude that for all  $\epsilon>0$ and appropriately chosen $\alpha$ we have
\[ \sum_n L_2^\pm(n) \geq x^{3-\alpha^{-1}-\epsilon \frac{2}{\alpha}-o(1)} \enspace . \]
Allowing $\epsilon$ to go to $0$, we obtain the result.
\end{proof}

\section{Upper bounds on the average number of degree-$2$ Frobenius pseudoprimes} \label{sec:upper}

Our proof will follow \cite[Theorem 2.2]{ErdosPomerance01} quite closely.  First we need 
a key lemma, the proof of which follows a paper of Pomerance \cite[Theorem 1]{Pomerance81}.

\begin{nota}
Given an integer $m$, define
\[ \lambda(m) = \underset{p \mid m}\lcm (p-1) \qquad\text{and}\qquad\lambda_2(m) = \underset{p \mid m}\lcm (p^2-1) \enspace . \]
Note that $\lambda(m)$ is not Carmichael's function, though it is equivalent when $m$ is squarefree.
Moreover, given $x>0$ we shall define
\[ \mathcal{L}(x) = {\rm exp}\left( \frac{\log{x} \log_3 x}{\log_2 x} \right) \]
where $\log_2 x = \log\log{x}$ and $\log_3 x = \log\log\log{x}$.  Here $\log$ is the natural logarithm.
\end{nota}

\begin{lemma} \label{lemma:lambda2count}
As $x\rightarrow \infty$ we have that 
$$
\# \{ m \leq x \ : \ \lambda_2(m)=n\} \leq x \cdot \mathcal{L}(x)^{-1 + o(1)} \enspace .
$$
\end{lemma}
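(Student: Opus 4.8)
The plan is to follow the structure of Pomerance's argument in \cite{Pomerance81} for counting $m \le x$ with $\lambda(m) = n$, adapting it to the function $\lambda_2(m) = \lcm_{p \mid m}(p^2-1)$. The key observation is that the condition $\lambda_2(m) = n$ forces every prime power $p^a \| m$ to satisfy $p^2 - 1 \mid n$ (so $p$ ranges over a restricted set of size at most $d(n)^{O(1)}$-ish, certainly at most $n^{o(1)}$ once we know $n$ is not too large) and, more importantly, that $m$ can have at most $O(\log x / \log_2 x)$ distinct prime factors. First I would dispose of the case where $n$ itself is large: if $\lambda_2(m) = n$ with $m \le x$, then $n \le \lambda_2(m) \le \prod_{p \mid m}(p^2-1) < m^2 \le x^2$, and more usefully, combining with a bound on the number of prime factors, one shows $n$ lies in a controlled range; the interesting regime is $n = x^{O(1)}$. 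Then for fixed $n$, I would write $m = \prod p_i^{a_i}$ and bound the count by first choosing the \emph{squarefree kernel}: the primes $p$ dividing $m$ must satisfy $p^2 - 1 \mid n$, and a product of $k$ such primes that is $\le x$ can be chosen in at most $\binom{\omega^*(n)}{k}$ ways where $\omega^*(n) = \#\{p : p^2-1 \mid n\}$, times a factor $\mathcal{L}(x)^{o(1)}$ to account for the prime-power exponents (the number of ways to inflate a squarefree number $\le x$ to a non-squarefree one $\le x$ is $x^{o(1)}$, in fact $\mathcal{L}(x)^{o(1)}$, by a standard estimate).

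The heart of the matter is then a counting estimate: for fixed $n$,
\[
\#\{m \le x : \lambda_2(m) = n\} \le \mathcal{L}(x)^{o(1)} \sum_{k} \#\{p_1 < \cdots < p_k : p_i^2 - 1 \mid n,\ p_1 \cdots p_k \le x\}.
\]
Following Pomerance, I would split according to whether $k$ is small or large. Since each $p_i \ge 2$, having $p_1 \cdots p_k \le x$ forces $k \le \log x / \log 2$, but the real gain comes from the fact that if $m$ has many prime factors then those primes must be small, and the primes $p$ with $p^2 - 1 \mid n$ that are small are severely constrained because $n$ would have to be divisible by $\lcm$ of many $(p^2-1)$'s, forcing $n$ large relative to $x$. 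Concretely, one balances: either $\omega(m) = k$ is at most $\log x / \log_2 x$, in which case even the trivial bound $\binom{\omega^*(n)}{k} \le \omega^*(n)^k \le (\log n)^{O(k)}$ combined with $\omega^*(n) = n^{o(1)}$ handles it; or $k$ is larger, in which case the $k$-th smallest prime $p$ with $p^2-1 \mid n$ exceeds roughly $k$, so $\lcm_{i \le k}(p_i^2-1)$ and hence $n$ is at least something like $\exp(c k \log_2 x)$, and then $x \ge p_1\cdots p_k$ together with this lower bound on $n$ pins things down. The product of the count over all eligible $n$ — but here we want it for a \emph{single} $n$ — so the clean statement is that after optimizing the split at $k \asymp \log x / \log_2 x$, each term contributes at most $\exp(O(\log x \log_3 x / \log_2 x)) = \mathcal{L}(x)^{O(1)}$, and dividing by the full range gives the factor $\mathcal{L}(x)^{-1+o(1)}$.

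The main obstacle I anticipate is getting the \emph{exponent} in $\mathcal{L}(x)^{-1+o(1)}$ exactly right rather than just $x^{o(1)}$: this requires the careful two-regime balancing of $k$ against the size of $n$, precisely as in \cite[Theorem 1]{Pomerance81}, and one must check that replacing $p-1$ by $p^2-1$ throughout does not change the optimization (it only changes constants, since $\log(p^2-1) \sim 2\log p$, which is absorbed). A secondary technical point is the passage from squarefree $m$ to general $m$: one needs the bound that the number of $m \le x$ with a given squarefree kernel $\kappa$ is at most $\tau(x)$-like, and summed appropriately this costs only $\mathcal{L}(x)^{o(1)}$; this is routine but must be stated. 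I would also need, at the outset, to invoke Proposition \ref{prop:Msize}-style estimates or the classical bound $\omega^*(n) \le (\log n)^{O(1)}$ via $p^2 - 1 \mid n \Rightarrow p \le \sqrt{n+1}$ and a sieve/divisor bound, to control $\omega^*(n) = n^{o(1)}$.
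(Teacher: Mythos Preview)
Your proposal diverges from the paper's proof and, more importantly, from Pomerance's actual argument in \cite[Theorem 1]{Pomerance81}, which you cite as your model. Neither the paper nor Pomerance splits on the number $k$ of prime factors of $m$; both use Rankin's trick. One writes, for any $c\in(0,1)$,
\[
\#\{m \leq x : \lambda_2(m) = n\} \;\leq\; x^c \sum_{p \mid m \Rightarrow p-1 \mid n} m^{-c}
\;=\; x^c \prod_{p-1 \mid n}(1-p^{-c})^{-1} \;=:\; x^c A,
\]
chooses $c = 1 - \log_3 x/\log_2 x$, and then shows $\log A = o(\log x/\log_2 x)$ by iterating the Euler-product/Taylor-series step once more to bound $\log\log A$ in terms of $\sum_{p\mid n} p^{-c}$. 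This is short and delivers the exponent $-1+o(1)$ directly; no case analysis on $k$ is needed.

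Your combinatorial route has a genuine gap. The claimed bound $\omega^*(n) \leq (\log n)^{O(1)}$ (which you invoke in the small-$k$ regime) is false: the number of primes $p$ with $p^2-1 \mid n$, or even with $p-1\mid n$, is bounded only by $d(n)$, and for highly composite $n\leq x^2$ this can be as large as $\exp\bigl((2+o(1))\log x/\log_2 x\bigr)$. With the correct bound and $k$ near $\log x/\log_2 x$, the crude estimate $\binom{\omega^*(n)}{k}\leq \omega^*(n)^k$ can reach $\exp\bigl(\Theta((\log x/\log_2 x)^2)\bigr)$, far exceeding $x$; dividing by $k!$ saves only a factor comparable to $x/\mathcal{L}(x)$, which is not nearly enough. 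The constraint $\prod_i p_i \leq x$ must be exploited in an essential way, and the clean device for doing so is exactly to weight each $m$ by $(x/m)^c$ --- i.e., Rankin's method. Your two-regime balancing on $k$ does not, as sketched, recover the saving of $\mathcal{L}(x)^{-1}$ without that device.
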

\begin{proof}
For $c > 0$ we have 
$$
\sum_{m \leq x \atop \lambda_2(m)=n} 1 
\leq x^c \sum_{\lambda_2(m)=n} m^{-c}
\leq x^c \sum_{p \mid m \Rightarrow p^2-1 \mid n} m^{-c}
\leq x^c \sum_{p \mid m \Rightarrow p-1 \mid n} m^{-c} \enspace .
$$
By the theory of Euler products, we can rewrite the sum as $\prod_{p-1 \mid n} (1-p^{-c})^{-1}$.
Call this product $A$.  With $c = 1 - \frac{\log_3 x}{\log_2 x}$, the result follows if we 
can show that $\log{A} = o(\log{x}/\log_2 x)$.

Take $x$ large enough so that $\frac{\log_3 x}{\log_2 x} \leq \frac{1}{2}$; from this it follows that 
for all primes $p$, $\frac{1}{1-p^{-c}} \leq 4$.

Following Pomerance in \cite[Theorem 1]{Pomerance81}, via the Taylor series for $-\log(1-x)$ we can show that 
$$
\log{A} = \sum_{p-1 \mid n} \frac{p^{-c}}{1-p^{-c}}
\leq 4 \sum_{d \mid n} d^{-c} \leq 4 \prod_{p \mid n} (1-p^{-c})^{-1}
$$
and similarly
$$
\log\log{A} \leq \log 4 + \sum_{p \mid n} \frac{p^{-c}}{1-p^{-c}}
\leq \log 4 + 4 \sum_{p \mid n} p^{-c} \enspace .
$$
Since the sum is maximized with many small primes, 
an upper bound is 
$$
\log 4 + \sum_{p \leq 4 \log{x}} 4 p^{-c} = O\left( \frac{ (\log{x})^{1-c}}{(1-c) \log\log{x}} \right) 
$$
where the sum is evaluated using partial summation.
With $c = 1 - \log_3 x/\log_2 x$ we achieve
\[ \log\log{A} = O\left(\frac{\log_2 x}{\log_3 x}\right) \]
 so that $\log{A} = o(\log{x}/\log_2 x)$ as requested.
\end{proof}

An interesting question is whether the upper bound in that lemma can be lowered.  If so, a more clever upper bound 
would be required for the sum over primes $p$ dividing $m$ such that $p^2-1 \mid n$.

In \cite[Theorem 2.2]{ErdosPomerance01}
 the key idea is to parameterize composite $n$ according to the size of the subgroup 
of Fermat liars, and then to prove a useful divisibility relation involving $n$.  Here we reverse this strategy: 
we parameterize according to a divisibility condition and prove an upper bound on the size of the set of Frobenius liars.

\begin{lemma} \label{lemma:minus}
Assume $n$ is composite and let $k$ be the smallest integer such that $\lambda_2(n) \mid k(n^2-1)$.
Then
\[ L_2^-(n) \leq \frac{1}{k} \prod_{p \mid n} (p^2-1) \enspace .  \]
\end{lemma}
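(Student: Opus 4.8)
The plan is to combine the crude upper bound $L_2^-(n)\le\prod_{p\mid n}\gcd(n^2-1,p^2-1)$ from Lemma \ref{lemma:liarcountupperbounds} with a divisibility comparison carried out one prime at a time. Observe first that the smallest $k$ with $\lambda_2(n)\mid k(n^2-1)$ is exactly $k=\lambda_2(n)/\gcd(\lambda_2(n),n^2-1)$. So it suffices to prove the purely arithmetic inequality
\[ k\prod_{p\mid n}\gcd(n^2-1,p^2-1)\ \le\ \prod_{p\mid n}(p^2-1)\enspace . \]

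To do this I would fix an arbitrary prime $\ell$ and compare the $\ell$-adic valuations of the two sides. Set $a_p=v_\ell(p^2-1)$ for each $p\mid n$, set $b=v_\ell(n^2-1)$, and write $\lambda=\max_{p\mid n}a_p$; since $\lambda_2(n)=\lcm_{p\mid n}(p^2-1)$ we have $v_\ell(\lambda_2(n))=\lambda$. Then $v_\ell(k)=\lambda-\min(\lambda,b)=\max(0,\lambda-b)$ and $v_\ell(\gcd(n^2-1,p^2-1))=\min(b,a_p)$, so the inequality at $\ell$ is equivalent to
\[ \max(0,\lambda-b)\ \le\ \sum_{p\mid n}\bigl(a_p-\min(b,a_p)\bigr)=\sum_{p\mid n}\max(0,a_p-b)\enspace . \]
This last statement is immediate: if $\lambda\le b$ the left-hand side is $0$; otherwise choose $p^\ast\mid n$ with $a_{p^\ast}=\lambda$, and the single summand $\max(0,a_{p^\ast}-b)=\lambda-b$ already matches the left-hand side. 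Ranging over all primes $\ell$ gives the product inequality, and rearranging with Lemma \ref{lemma:liarcountupperbounds} yields $L_2^-(n)\le\frac1k\prod_{p\mid n}(p^2-1)$.

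There is no genuine obstacle here beyond the valuation bookkeeping; the only points needing a line of care are that the bound of Lemma \ref{lemma:liarcountupperbounds} is the right thing to feed in (we only discard the harmless factor $1/2$ per prime and the refinement from $\gcd(n^2-1,p^2-1,n-p)$ to $\gcd(n^2-1,p^2-1)$), and that the stated $k$ is genuinely $\lambda_2(n)/\gcd(\lambda_2(n),n^2-1)$ and not something larger.
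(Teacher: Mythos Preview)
Your proof is correct and follows essentially the same route as the paper: both identify $k=\lambda_2(n)/\gcd(\lambda_2(n),n^2-1)$, invoke Lemma~\ref{lemma:liarcountupperbounds}, and then verify prime-by-prime that the $\ell$-adic valuation of $k\prod_{p\mid n}\gcd(n^2-1,p^2-1)$ is bounded by that of $\prod_{p\mid n}(p^2-1)$. Your formulation via $\max(0,\lambda-b)\le\sum_p\max(0,a_p-b)$ is a slightly cleaner packaging of the same case split the paper carries out with ordered exponents $e_1\le\cdots\le e_r$.
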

\begin{proof}
We have $k = \lambda_2(n)/\gcd(\lambda_2(n), n^2-1)$.  Our goal will be to show that 
\begin{equation} \label{division1}
\frac{\lambda_2(n)}{\gcd(\lambda_2(n), n^2-1)} \prod_{p \mid n} \gcd(p^2-1, n^2-1)
\left| \ \prod_{p \mid n} p^2-1 \right. \enspace .
\end{equation}
If this is true, then combined with Lemma \ref{lemma:liarcountupperbounds} we have
$$
L_2^-(n) \leq \prod_{p \mid n} \gcd(n^2-1, p^2-1) \leq \frac{1}{k} \prod_{p \mid n} p^2-1 \enspace .
$$
Fix arbitrary prime $q$  and let $q^{e_i}$ be the greatest power of $q$ that divides $p_i^2-1$.
Suppose we have ordered the $r$ primes dividing $n$ according to the quantity $e_i$.  Let $q^d$ be the power of $q$ that divides $n^2-1$.

Consider first the case where $d \geq e_r$, the largest of the $e_i$.  Then $q^{e_r}$ divides $\lambda_2(n)$ 
since it is defined as an $\lcm$ of the $p^2-1$, and $q^{e_r}$ divides $\gcd(\lambda_2(n), n^2-1)$ since 
$d \geq e_r$.  We are left with the observation that $\prod_{p \mid n} \gcd(p^2-1, n^2-1)$ is a divisor 
of $\prod_{p \mid n} p^2-1$, and thus in particular the $q$ power divides.

Next consider the case where $d \geq e_i$ for $i \leq \ell$ and $d < e_i$ for $i > \ell$.  Then $q^{e_r}$ divides $\lambda_2(n)$ 
since it is defined as an $\lcm$, and $q^d$ divides $\gcd(\lambda_2(n), n^2-1)$ since $d < e_r$.  The total power 
of $q$ dividing the LHS is then $e_r - d + (\sum_{i=1}^{\ell} e_i) + (r-\ell) d$.  We have 
\begin{align*}
\left(\sum_{i=1}^{\ell} e_i\right) + e_r-d + d(r-\ell)
 &= \left(\sum_{i=1}^{\ell} e_i\right) + d(r-\ell-1) + (d + e_r-d)\\
 &\leq \left(\sum_{i=1}^{\ell} e_i\right) + \left(\sum_{i=\ell+1}^{r-1} e_i \right) + e_r\\
 &= \sum_{i=1}^{r} e_i \enspace ,
\end{align*}
which is the power of $q$ dividing $\prod p^2-1$.
Since $q$ was arbitrary, (\ref{division1}) holds, which finishes the proof.
\end{proof}

The result for $L_2^+(n)$ is similar.  We do need a new piece of notation, namely given a prime $p$ we shall define
\[ d_n(p) = \begin{cases} (p-1)^2 & \text{if }\gcd(n-1, p-1)^2 > \gcd(n-1, p^2-1) \\ p^2-1 &\text{if }  \gcd(n-1, p-1)^2 \leq \gcd(n-1, p^2-1)
\enspace . \end{cases} \]

\begin{lemma} \label{lemma:plus}
Suppose $n$ is composite and let $k$ be the smallest integer such that  $\lambda(n) \mid k(n-1)$.
Then 
\[ L_2^+(n) \leq \frac{1}{k} \prod_{p \mid n} d_n(p) \enspace . \]
\end{lemma}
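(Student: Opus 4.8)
The plan is to mirror the proof of Lemma \ref{lemma:minus}: combine the crude bound of Lemma \ref{lemma:liarcountupperbounds} with a divisibility relation, though accounting for the factor $k$ will turn out to be easier here. Write $m_p = \max(\gcd(n-1,p^2-1),\gcd(n-1,p-1)^2)$, so that Lemma \ref{lemma:liarcountupperbounds} reads $L_2^+(n) \le \prod_{p \mid n} m_p$. The first step is to record that $m_p \mid d_n(p)$ for every $p \mid n$: unwinding the definition of $d_n(p)$, when $d_n(p) = (p-1)^2$ we have $m_p = \gcd(n-1,p-1)^2 \mid (p-1)^2$, and when $d_n(p) = p^2-1$ we have $m_p = \gcd(n-1,p^2-1) \mid p^2-1$. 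In particular $\prod_{p\mid n} m_p$ divides $\prod_{p\mid n} d_n(p)$, and it remains only to absorb $k$ into the quotient.

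Thus the crux is the divisibility
\[ k \ \Big|\ \prod_{p \mid n} \frac{d_n(p)}{m_p} \enspace , \]
since once this is known, $k\,L_2^+(n) \le k\prod_{p\mid n} m_p \le \prod_{p\mid n} d_n(p)$ because every quantity is a positive integer, which is exactly the desired inequality. I would verify this one prime $q$ at a time. From $k = \lambda(n)/\gcd(\lambda(n),n-1)$ and $\lambda(n) = \lcm_{p\mid n}(p-1)$ we get $v_q(k) = \max\big(\,\max_{p\mid n} v_q(p-1) - v_q(n-1),\,0\,\big)$, where $v_q$ denotes the $q$-adic valuation. If this is $0$ there is nothing to prove for $q$; otherwise pick $p^\ast \mid n$ with $v_q(p^\ast-1) = \max_{p \mid n} v_q(p-1) =: A$ and set $d = v_q(n-1) < A$. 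Computing $v_q(d_n(p^\ast)/m_{p^\ast})$ in the two cases of the definition of $d_n$ gives $2A - 2d$ when $d_n(p^\ast) = (p^\ast-1)^2$ (here $v_q(m_{p^\ast}) = 2\min(A,d) = 2d$), and $v_q((p^\ast)^2-1) - d \ge A - d$ when $d_n(p^\ast) = (p^\ast)^2-1$ (here $v_q(m_{p^\ast}) = \min(v_q((p^\ast)^2-1),d) = d$); in either case this single factor already contributes at least $A - d = v_q(k)$, and the remaining factors of the product contribute nonnegative exponents. Ranging over all primes $q$ gives the displayed divisibility, and hence the lemma.

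The only genuinely computational part is the pair of valuation identities for $v_q(d_n(p^\ast)/m_{p^\ast})$, and I do not expect any real obstacle: in contrast with Lemma \ref{lemma:minus}, there is no need to sort the prime divisors of $n$ or to estimate a telescoping sum, since $p^\ast - 1$ (or $(p^\ast)^2 - 1$) is on its own divisible by a sufficiently large power of $q$. The one point to treat with care — and the reason for reducing first to the per-prime bound $m_p \mid d_n(p)$ rather than trying to evaluate $v_q(m_p)$ head-on — is that the $q$-adic valuation of a maximum of two integers is not in general the maximum of their valuations, so $v_q(m_p)$ itself has no clean closed form.
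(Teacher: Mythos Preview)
Your proof is correct and takes essentially the same approach as the paper: both reduce via Lemma \ref{lemma:liarcountupperbounds} to the divisibility $k\prod_{p\mid n} m_p \mid \prod_{p\mid n} d_n(p)$ and verify it one prime $q$ at a time. The only difference is bookkeeping --- the paper orders the primes $p_i \mid n$ by $v_q(d_n(p_i))$ and bounds the resulting exponent sum exactly as in Lemma \ref{lemma:minus}, whereas you isolate a single $p^\ast$ with maximal $v_q(p^\ast-1)$ and show that this one factor of the product already absorbs all of $v_q(k)$.
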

\begin{proof}
From Lemma \ref{lemma:liarcountupperbounds} we know that 
$$
L_2^+(n) \leq \prod_{p \mid n} \max( \gcd(n-1,p^2-1), \gcd(n-1,p-1)^2)
$$
and from the definition of $k$ we know that $k$ is exactly $\lambda(n)/\gcd(\lambda(n), n-1)$.
It thus suffices to show that
\begin{align}
\left. \frac{\lambda(n)}{\gcd(\lambda(n), n-1)} \prod_{p \mid n} \max(\gcd(n-1,p^2-1), \gcd(n-1, p-1)^2) \ 
\right| \prod_{p \mid n} d_n(p) \enspace . \label{eqn:gcd_division}
\end{align}

For an arbitrary prime $q$, let $q^{e_i}$ be the power of $q$ dividing $d_n(p)$ and let $q^d$ be the power of $q$ dividing $n-1$.
Order the $e_i$, and suppose that $d \geq e_i$ for $i \leq \ell$ and $d < e_i$ for $i > \ell$.
Then the exponent of $q$ dividing $\prod_{p \mid n} d_n(p)$ is $\sum_{i=1}^r e_i$.  Following the same argument 
as in Lemma \ref{lemma:minus}, the exponent of $q$ dividing the left hand side of (\ref{eqn:gcd_division}) is 
$$
(e_r - d) + \left( \sum_{i=1}^{\ell} e_i \right) + (r-\ell)d \leq \sum_{i=1}^r e_i \enspace .
$$
Since $q$ was arbitrary, the division in (\ref{eqn:gcd_division}) holds.
\end{proof}

\begin{theorem}
As $x\rightarrow \infty$ we have that
$$
\sum_{n \leq x}{}' L_2(n) \leq x^3 \mathcal{L}(x)^{-1 + o(1)}
$$
where $\sum'$ signifies the sum is only over composite integers.
\end{theorem}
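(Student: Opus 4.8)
The plan is to treat $\sum_{n\le x}' L_2^+(n)$ and $\sum_{n\le x}' L_2^-(n)$ separately, using $L_2(n)=L_2^+(n)+L_2^-(n)$. I describe the $-$ case; the $+$ case is entirely parallel, with Lemma \ref{lemma:plus} replacing Lemma \ref{lemma:minus}, the estimate $d_n(p)<p^2$ replacing $p^2-1<p^2$, and the analogue of Lemma \ref{lemma:lambda2count} with $\lambda$ in place of $\lambda_2$ (this analogue follows from the identical computation, since $\lambda(m)=n$ forces $p-1\mid n$ for every $p\mid m$, which is precisely the hypothesis used in the proof of Lemma \ref{lemma:lambda2count}).

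First I would reduce to a statement about reciprocals of a single divisibility parameter. By Lemma \ref{lemma:minus}, $L_2^-(n)\le\frac1k\prod_{p\mid n}(p^2-1)$, where $k=k(n)$ is the least positive integer with $\lambda_2(n)\mid k(n^2-1)$, equivalently $k(n)=\lambda_2(n)/\gcd(\lambda_2(n),n^2-1)$. Since $\prod_{p\mid n}(p^2-1)<\big(\prod_{p\mid n}p\big)^2\le n^2\le x^2$, this gives $L_2^-(n)<x^2/k(n)$, so it is enough to prove
\[
\sum_{n\le x}'\frac1{k(n)}\ \le\ x\,\mathcal{L}(x)^{-1+o(1)},
\]
the sum over composite $n$; multiplying by $x^2$ and adding the analogous $+$ bound then yields the theorem.

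To bound this sum I would follow \cite[Theorem 2.2]{ErdosPomerance01}, but --- as in the discussion following Lemma \ref{lemma:lambda2count} --- organizing the count by the divisibility datum $D:=\lambda_2(n)$ rather than by the size of the liar set. Fix a slowly growing $Z=Z(x)$ with $\log Z=o\big(\log x\cdot\log_3 x/\log_2 x\big)$, so $Z=\mathcal{L}(x)^{o(1)}$. Composite $n\le x$ with $\lambda_2(n)\le Z$ number at most $Z\cdot x\,\mathcal{L}(x)^{-1+o(1)}=x\,\mathcal{L}(x)^{-1+o(1)}$ by Lemma \ref{lemma:lambda2count}, and since $1/k(n)\le1$ these contribute an admissible amount. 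For the remaining $n$ I would write $1/k(n)=\gcd(D,n^2-1)/D=D^{-1}\sum_{e\mid D,\ e\mid n^2-1}\phi(e)$, interchange the order of summation, and split the divisors $e$ at a second threshold: for large $e$ one uses $\#\{n\le x:e\mid n^2-1\}\le 2^{\omega(e)}(x/e+1)\le x^{o(1)}(x/e+1)$, the count of solutions of $n^2\equiv1\pmod e$; for small $e$ one controls the partial sum $\sum_{e\mid D,\ e\ \text{small}}\phi(e)$ as a small fraction of $D$, using that $D=\lambda_2(n)$ is a least common multiple of shifted-prime values together with a further appeal to Lemma \ref{lemma:lambda2count} to discard the few exceptional $n$. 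Balancing the two contributions, exactly as in \cite{ErdosPomerance01}, produces the displayed estimate.

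The hard part will be this balancing. For a fixed $D$ the two available bounds --- one from the multiplicative structure of $\lambda_2(n)$ together with Lemma \ref{lemma:lambda2count}, the other from the elementary congruence count for $n^2\equiv1\pmod e$ --- push the two thresholds in opposite directions, and one must verify that the full saving by a power of $\mathcal{L}(x)$ in Lemma \ref{lemma:lambda2count} is exactly enough to close the resulting gap, uniformly in $D$ and in the splitting parameters, and that the restriction to composite $n$ (without which $\sum 1/k(n)$ over primes alone is already of size $x/\log x$) is invoked where needed. Everything else is routine: the reductions via Lemmas \ref{lemma:minus}, \ref{lemma:plus}, and \ref{lemma:liarcountupperbounds}, the trivial estimates $\prod_{p\mid n}(p^2-1)<n^2$ and $d_n(p)<p^2$, and the passage from an average bound to the stated sum.
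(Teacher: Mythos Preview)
Your reduction to showing $\sum_{n\le x}'1/k(n)\le x\,\mathcal{L}(x)^{-1+o(1)}$ (and its $+$ analogue) is correct and is exactly where the paper begins as well. The gap is in your proposed method for bounding this sum. Organizing by $D=\lambda_2(n)$, expanding $\gcd(D,n^2-1)=\sum_{e\mid\gcd}\phi(e)$, and splitting $e$ at a threshold does not close, and it is \emph{not} ``exactly as in \cite{ErdosPomerance01}'': their argument is organized differently. Concretely, after your interchange the large-$e$ contribution is at most $\sum_{e>T}\phi(e)\sum_{n:\,e\mid n^2-1}1/D(n)$; the only control you cite is $D(n)\ge e$ and $\#\{n\le x:e\mid n^2-1\}\le 2^{\omega(e)}x/e$, which yields $\sum_{e>T}2^{\omega(e)}x/e$, a divergent sum over unrestricted $e$. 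The extra condition $e\mid\lambda_2(n)$ is not used in your sketch, and it is not clear how to exploit it here. On the small-$e$ side, ``a small fraction of $D$'' only helps when $D$ is large, but Lemma~\ref{lemma:lambda2count} gives $\#\{n:\lambda_2(n)=D\}\le x\mathcal{L}(x)^{-1+o(1)}$ for each fixed $D$, so discarding all $n$ with $D\le Z$ already costs a factor $Z$; no admissible pair $(Z,T)$ balances the two pieces down to $x\mathcal{L}(x)^{-1+o(1)}$.

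The paper (following \cite{ErdosPomerance01}) parameterizes by $k$, not by $D$. Writing $\sum'1/k(n)=\sum_k|D_k(x)|/k$, the tail $k>\mathcal{L}(x)$ is trivially $\le x/\mathcal{L}(x)$, and for each fixed $k\le\mathcal{L}(x)$ one proves $|D_k(x)|\le x\mathcal{L}(x)^{-1+o(1)}$ uniformly via a three-case split on $n$: (i) $n\le x/\mathcal{L}(x)$; (ii) $n$ has a prime factor $p>\sqrt{k\mathcal{L}(x)}$, where the key structural input is that $(p^2-1)/\gcd(p^2-1,k)$ divides $n^2-1$, so fixing $p$ leaves at most $2^{\omega(p^2-1)+1}xk/(p(p^2-1))$ choices of $n$, and summing over such $p$ gives $x\mathcal{L}(x)^{-1+o(1)}$; (iii) $n>x/\mathcal{L}(x)$ with all prime factors $\le\sqrt{k\mathcal{L}(x)}$, where one extracts a divisor $d$ of $n$ in the window $\big(x/(\mathcal{L}(x)\sqrt{k\mathcal{L}(x)}),\,x/\mathcal{L}(x)\big]$ and applies Lemma~\ref{lemma:lambda2count} to $d$ (using $\lambda_2(d)\mid k(n^2-1)$), then sums over the window via partial summation. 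The $+$ case uses $C_k(x)$, $\lambda$, and the original Erd\H{o}s--Pomerance case (iii) verbatim. What your outline is missing is precisely this mechanism: it is the presence of a large prime factor or of a divisor in a controlled range that converts ``$k$ small'' into a usable congruence on $n$, and the $\phi$-expansion over divisors of $D$ does not supply a substitute.
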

\begin{proof}
Let $C_k(x)$ denote the set of composite $n \leq x$ where $k$ is the smallest integer such that $\lambda(n) \mid k(n-1)$, 
and let $D_k(x)$ denote the set of composite $n \leq x$ where $k$ is the smallest integer such that
 $\lambda_2(n) \mid k(n^2-1)$.  By Lemma \ref{lemma:minus}, if $n \in D_k(x)$ then $L_2^-(n) \leq n^2/k$.
 Similarly, by Lemma \ref{lemma:plus}, if $n \in C_k(x)$ then $L_2^+(n) \leq n^2/k$.
 Then
\begin{align*}
\sum_{n \leq x}{}' L_2(n)
&= \sum_{n \leq x}{}' L_2^+(n) + L_2^-(n) \\
&= \sum_{k}\sum_{n \in C_k(x)} L_2^+(n) + \sum_k \sum_{n \in D_k(x)} L_2^-(n) \\
&\leq \sum_{k} \sum_{n \in C_k(x)} \frac{n^2}{k} + \sum_k \sum_{n \in D_k(x)} \frac{n^2}{k} \\
& \leq \sum_{n \leq x} \frac{n^2}{\mathcal{L}(x)} + \sum_{k \leq \mathcal{L}(x)} \sum_{n \in C_k(x)} \frac{n^2}{k}
		+ \sum_{n \leq x} \frac{n^2}{\mathcal{L}(x)} + \sum_{k \leq \mathcal{L}(x)} \sum_{n \in D_k(x)} \frac{n^2}{k} \\
& = \frac{2x^3}{\mathcal{L}(x)} + x^2 \sum_{k \leq \mathcal{L}(x)} \frac{\abs{C_k(x)}}{k} + x^2 \sum_{k \leq \mathcal{L}(x)} \frac{\abs{D_k(x)}}{k}
\end{align*}
and thus the proof is complete if we can prove that $\abs{C_k(x)} \leq x \mathcal{L}(x)^{-1 + o(1)}$
and $\abs{D_k(x)} \leq x \mathcal{L}(x)^{-1 + o(1)}$ hold uniformly for $k \leq \mathcal{L}(x)$.

We focus first on the $D_k(x)$ result.  For every $n \in D_k(x)$, either 
\begin{enumerate}
\item[(1)] $n \leq x/\mathcal{L}(x)$, 
\item[(2)]
$n$ is divisible by some prime $p > \sqrt{k\mathcal{L}(x)}$, and/or 
\item[(3)] $n \geq x/\mathcal{L}(x)$ and $p \mid n$ implies $p \leq \sqrt{k\mathcal{L}(x)}$.
\end{enumerate}

The number of integers in case (1) is at most $x \mathcal{L}(x)^{-1}$ by assumption.  

Turning to case (2),
if $n \in D_k(x)$ and $p \mid n$ then $p^2-1$ is a divisor of $\lambda_2(n)$ and hence of $k(n^2-1)$.
This means that
\[ \left. \frac{p^2-1}{\gcd(k, p^2-1)} \right| n^2-1 \enspace .
\]  A straightforward application of the Chinese 
remainder theorem shows that the count of residues $x \pmod{a}$ with $x^2 = 1 \pmod{a}$ is at most 
$2^{\omega(a)+1}$ (for example, there are at most $4$ when working modulo $8$).  
Thus the count of $n \in D_k(x)$ with $p \mid n$ is at most
$$
\left \lceil \frac{2x 2^{\omega(p^2-1)}}{p (p^2-1)/\gcd(p^2-1, k)} \right \rceil
\leq \frac{2xk 2^{\omega(p^2-1)}}{p(p^2-1)}
= \frac{xk \mathcal{L}(x)^{o(1)}}{p (p^2-1)} \enspace .
$$
The equality $2^{\omega(p^2-1)+1} = \mathcal{L}(x)^{o(1)}$ follows from the fact that the maximum number of distinct prime 
factors dividing any integer $m \leq x^2$ is $(1 + o(1))\log(x^2)/\log\log(x^2)$ (see the proof 
of Proposition \ref{prop:lb2X} for the previous instance of this fact).
We conclude that the maximum number of $n$ in case $(2)$ is 
$$
\sum_{p > \sqrt{k \mathcal{L}(x)}} \frac{2xk \mathcal{L}(x)^{o(1)}}{p^3}
\leq xk \mathcal{L}(x)^{o(1)} \sum_{p > \sqrt{k \mathcal{L}(x)}} \frac{1}{p^3}
=x \mathcal{L}(x)^{-1 + o(1)} \enspace .
$$

For $n$ in case (3), since all primes dividing $n$ are small we know that $n$ has a divisor $d$ satisfying 
$$
\frac{x}{\mathcal{L}(x) \sqrt{k\mathcal{L}(x)}} < d \leq \frac{x}{\mathcal{L}(x)} \enspace .
$$
To construct such a divisor, remove primes from $n$ until the remaining integer is smaller than $x/\mathcal{L}(x)$; 
since each prime dividing $n$ is at most $\sqrt{k\mathcal{L}(x)}$ the lower bound follows.  
Let $A$ be the set of $d \in \Z$ that fall between the bounds given. We have 
$\lambda_2(d) \mid \lambda_2(n) \mid k(n^2-1)$, and so by a similar argument we know that the number of 
$n \in D_k(x)$ with $d \mid n$ is at most
$$
\frac{x \mathcal{L}(x)^{o(1)}}{d \lambda_2(d)/\gcd(k, \lambda_2(d))} \enspace .
$$
Unlike the case where $d$ is prime, here we might have $\gcd(d, \lambda_2(d)) \neq 1$.  But then the set of $n \in D_k(x)$
with $d \mid n$ is empty, so the bound given remains true.

Now, the number of $n \in D_k(x)$ in case (3) is at most
\begin{align*}
\sum_{d \in A} \frac{x \mathcal{L}(x)^{o(1)} \gcd(k, \lambda_2(d))}{d \lambda_2(d)}
&= x \mathcal{L}(x)^{o(1)} \sum_{d \in A} \frac{\gcd(k, \lambda_2(d))}{d \lambda_2(d)} \\
& = x \mathcal{L}(x)^{o(1)} \sum_{m \leq x} \frac{1}{m} \sum_{d \in A \atop \lambda_2(d)/\gcd(k, \lambda_2(d)) = m} \frac{1}{d} \\
&\leq x \mathcal{L}(x)^{o(1)} \sum_{m \leq x} \frac{1}{m} \sum_{u \mid k} \sum_{d \in A \atop \lambda_2(d) = mu} \frac{1}{d} \enspace .
\end{align*}
Note that if $\lambda_2(d)/\gcd(k, \lambda_2(d)) = m$, then $\lambda_2(d) = mu$ for some $u \mid k$, and thus 
summing over all $u \mid k$ gives an upper bound.

To evaluate the inner sum we use partial summation and Lemma \ref{lemma:lambda2count} to get 
\begin{align*}
\sum_{d \in A \atop \lambda_2(d) = mu} \frac{1}{d} 
&\leq \frac{1}{x/\mathcal{L}(x)} \sum_{d \in A \atop \lambda_2(d)=mu} 1 + 
	\int_{x/\mathcal{L}(x)\sqrt{k\mathcal{L}(x)}}^{x/\mathcal{L}(x)} \frac{1}{t^2} \sum_{d < t \atop \lambda_2(d) = mu} 1 \ {\rm d}t \\
& \leq \frac{\mathcal{L}(x)}{x} \frac{x/\mathcal{L}(x)}{\mathcal{L}(x/\mathcal{L}(x))^{1+o(1)}} + 
	\int_{x/\mathcal{L}(x)\sqrt{k\mathcal{L}(x)}}^{x/\mathcal{L}(x)} \frac{1}{t^2} \frac{t}{\mathcal{L}(t)^{1+o(1)}} \ {\rm d}t \\
& \leq \frac{1}{\mathcal{L}(x/\mathcal{L}(x))^{1+o(1)}} + \frac{\log{x}}{\mathcal{L}(x/\mathcal{L}(x)\sqrt{k\mathcal{L}(x)})}
= \mathcal{L}(x)^{-1+o(1)}
\end{align*}
for large enough $x$ and uniformly for $k \leq \mathcal{L}(x)$.  
Note that the count of divisors of an integer $k$ is bounded above by
$2^{(1+o(1))\log{k}/\log\log{k}}$ (see for instance \cite[Theorem 317]{HardyWright}).
Thus the count in case (3) is 
$$
\frac{x}{\mathcal{L}(x)^{1+o(1)}} \sum_{m \leq x} \frac{1}{m} \sum_{u \mid k} 1
\leq \frac{x \log{x}}{\mathcal{L}(x)^{1+o(1)}} 2^{(1+o(1))\frac{\log{k}}{\log\log{k}}}
= \frac{x}{\mathcal{L}(x)^{1+o(1)}}
$$
uniformly for $k \leq \mathcal{L}(x)$ and large enough $x$.

Proving $\abs{C_k(x)} \leq x \mathcal{L}(x)^{-1+o(1)}$ uniformly for $k \leq \mathcal{L}(x)$ will be similar.
Here the three cases are:
\begin{enumerate}
\item[(1)] $n \leq x/\mathcal{L}(x)$, 
\item[(2)] $n$ is divisible by some prime 
$p > k\mathcal{L}(x)$, and 
\item[(3)] $n \geq \mathcal{L}(x)$ and $p \mid n$ implies $p \leq k\mathcal{L}(x)$.
\end{enumerate}

If $n \in C_k(x)$ then $\lambda(n) \mid k(n-1)$.  Thus the number of $n \in C_k(x)$ 
with $p \mid n$ is at most 
$$
\left \lceil \frac{x}{p(p-1)/\gcd(p-1,k)} \right \rceil \leq \frac{xk}{p^2}
$$
and so the count of $n$ in case (2) is $x \mathcal{L}(x)^{-1 +o(1)}$.  

For $n$ in case (3) we know $n$ has a divisor $d$ satisfying 
$$
\frac{x}{k \mathcal{L}(x)^2} < d \leq \frac{x}{\mathcal{L}(x)}
$$
and so the bound of $x\mathcal{L}(x)^{-1+o(1)}$ follows exactly from 
case (3) of \cite[Theorem 2.2]{ErdosPomerance01}.
\end{proof}

\section{Conclusions and further work}

A very naive interpretation of Theorems \ref{thm:th1} and \ref{thm:th2} is that 
for any given $f$, you should expect that there are $n$ for which $f$ is a liar. Moreover, one expects to find this in both the $+1$ and $-1$ cases.
Likewise, one expects that given $n$, there will exist $f$ which is a liar in both the $+1$ and $-1$ cases.
To emphasize the extent to which one should be careful with the careless use of the word `expect' we remind the reader that in Section \ref{subsec:vanish} we describe infinite families of $n$ for which $L_2^-(n)=0$.
It would be interesting to know how often $L_2^-(n)$ vanishes for $n<x$.

It is useful to note that this vanishing described in Section \ref{subsec:vanish} gives some heuristic evidence to suggest that the Baillie-PSW test is significantly more accurate than other primality tests. Further work to make these heuristics more precise may be worth pursuing.

In contrast to the above, the proof of Theorem \ref{thm:th2} suggests that one should expect there to exist many Frobenius-Carmichael numbers (see \cite[Section 6]{Grantham01} for a definition) relative to quadratic fields $K$ for which $\jac{n}{\delta_K} = -1$.  
It is likely this heuristic can be extended to show that for each fixed quadratic field $K$ there exist infinitely many Frobenius-Carmichael numbers $n$ relative to $K$ with $\jac{n}{\delta_K} = -1$. 
A result of this form would be a nice extension of \cite{Grantham10}, 
which proved infinitely many Frobenius-Carmichael numbers $n$ for which $\jac{n}{\delta_K} = 1$. 
As such a number would also be a classical Carmichael number, such numbers would tend to lead to a failure of the Baillie-PSW test.
If this could be done for all $K$ it would show that all $f$ admit $n$ for which $f$ is a liar and the Jacobi symbol is $-1$.
It remains an open problem to prove such numbers exist.

It remains unclear from our results if the expected value of $L_2^-(n)$ is actually less (in an asymptotic sense) than the expected value of $L_2^+(n)$. 
Various heuristics suggest that it ought to be.
A result of this sort would put further weight behind the Baillie-PSW test.




\section{Acknowledgments}


The authors would like to thank Carl Pomerance for several comments which improved the results presented.




\providecommand{\bysame}{\leavevmode\hbox to3em{\hrulefill}\thinspace}
\providecommand{\MR}{\relax\ifhmode\unskip\space\fi MR }
\providecommand{\MRhref}[2]{%
  \href{http://www.ams.org/mathscinet-getitem?mr=#1}{#2}
}
\providecommand{\href}[2]{#2}

\end{document}